\theoremstyle{plain}
\newtheorem{theorem}{Theorem}
\newtheorem{lemma}{Lemma}
\newtheorem{assumption}{Assumption}
\theoremstyle{definition}
\def\NN{\mathbb{N}}
\def\RR{\mathbb{R}}
\def\calB{\mathcal{B}}
\def\calH{\mathcal{H}}
\def\calN{\mathcal{N}}
\def\calV{\mathcal{V}}
\def\bE{\mathbf{E}}
\def\bP{\mathbf{P}}
\def\bR{\mathbf{R}}
\def\1{\mathbbm{1}}
\theoremstyle{plain}
\newtheorem{definition}{Definition}
\def \bP {\mathbb{P}}
\def \bE {\mathbb{E}}
\def \bR {\mathbb{R}}
\newcommand{\stepa}[1]{\overset{\rm (a)}{#1}}
\newcommand{\stepb}[1]{\overset{\rm (b)}{#1}}
\newcommand{\stepc}[1]{\overset{\rm (c)}{#1}}
\definecolor{myblue}{rgb}{.8, .8, 1}
\definecolor{mathblue}{rgb}{0.2472, 0.24, 0.6} 
\definecolor{mathred}{rgb}{0.6, 0.24, 0.442893}
\definecolor{mathyellow}{rgb}{0.6, 0.547014, 0.24}
\begin{document}

\title{Minimax Optimal Nonparametric Estimation of Heterogeneous Treatment Effects}
\author{Zijun Gao\thanks{Department of Statistics, Stanford University. Email: \url{zijungao@stanford.edu}.} ~and Yanjun Han\thanks{Department of Electrical Engineering, Stanford University. Email: \url{yjhan@stanford.edu}.}}

\maketitle

\begin{abstract}
A central goal of causal inference is to detect and estimate the treatment effects of a given treatment or intervention on an outcome variable of interest, where a member known as the heterogeneous treatment effect (HTE) is of growing popularity in recent practical applications such as the personalized medicine. In this paper, we model the HTE as a smooth nonparametric difference between two less smooth baseline functions, and determine the tight statistical limits of the nonparametric HTE estimation as a function of the covariate geometry. In particular, a two-stage nearest-neighbor-based estimator throwing away observations with poor matching quality is near minimax optimal. We also establish the tight dependence on the density ratio without the usual assumption that the covariate densities are bounded away from zero, where a key step is to employ a novel maximal inequality which could be of independent interest. 
\end{abstract}

\tableofcontents

\section{Introduction and Main Results}
Causal inference aims to draw a causal relationship between some treatment and target responses. Nowadays, personalized medicine and huge available data make heterogeneous treatment effect (HTE) estimation meaningful and possible. While there are various practical approaches of estimating the HTE \cite{athey2016recursive,powers2017some,wager2018estimation,Kunzel4156}, some important theoretical questions remain unanswered.

In this paper, we consider the Neyman-Rubin potential outcome model \cite{rubin1974estimating} for the treatment effect. Assume for simplicity that there are $n$ individuals in the treatment group and the control group, respectively, where the generalizations to different group sizes are straightforward. For each individual $i\in [n]$ in the control group, we observe a vector of covariates $X_i^0\in \bR^d$ and her potential outcome $Y_i^0\in \bR$ for not receiving the treatment. Similarly, for individual $i$ in the treatment group, the covariates $X_i^1$ and the potential outcome $Y_i^1$ under the treatment are observed. We assume the following model for the potential outcomes: for any $i\in [n]$, 
\begin{align}
Y_i^0 = \mu_0(X_i^0) + \varepsilon_i^0, \qquad Y_i^1 = \mu_1(X_i^1) + \varepsilon_i^1, \label{eq.treatment_model}
\end{align}
where $\mu_0, \mu_1: \bR^d \to \bR$ are the baseline functions for the control and treatment groups, respectively, and $\varepsilon_i^0, \varepsilon_i^1$ are modeling errors. The heterogeneous treatment effect $\tau$ is defined to be the difference of the baseline functions: 
\begin{align}\label{eq.HTE}
\tau(x) \triangleq \mu_1(x) - \mu_0(x). 
\end{align}
In other words, the treatment effect $\tau(x)$ is the expected change in the outcomes after an individual with covariate $x$ receives the treatment, which is usually heterogeneous as $\tau(x)$ typically varies in $x$. The target is to find an estimator $\hat{\tau}$ which comes close to the true HTE $\tau$ under the $L_2$ norm of functions, based on the control and treatment observations $\{(X_i^0, Y_i^0)\}_{i\in [n]}, \{(X_i^1, Y_i^1)\}_{i\in [n]}$. 

In HTE estimation, modeling of the baseline functions $\mu_0, \mu_1$ or the HTE $\tau$ plays an important role. In practice, the treatment effect $\tau$ is typically easier to estimate than the baseline functions $\mu_0, \mu_1$, as ideally the treatment effect depends solely on the single treatment. In this paper, we assume that both the baseline and treatment effect functions are nonparametric functions, with an additional constraint that $\tau$ is smoother than $\mu_0$: 
\begin{assumption}[Baseline and HTE functions]
	\label{assu:smoothness}
	The baseline $\mu_0$ and the treatment effect $\tau$ belong to $d$-dimensional H\"{o}lder balls with smoothness parameters $\beta_\mu\le \beta_\tau$, respectively. 
\end{assumption}

Recall the following definition of H\"{o}lder balls. 
\begin{definition}[H\"{o}lder ball]
	The H\"{o}lder ball $\calH_d(\beta)$ with dimension $d$ and smoothness parameter $\beta\ge 0$ is the collection of functions $f: \bR^d\to \bR$ supported on $[0,1]^d$ with
	\begin{align*}
	\left| \frac{\partial^s f}{\partial x_1^{\beta_1}\cdots \partial x_d^{\beta_d} }(x) - \frac{\partial^s f}{\partial x_1^{\beta_1}\cdots \partial x_d^{\beta_d} }(y) \right|\le L\|x-y\|_2^\alpha
	\end{align*}
	for all $x,y\in \bR^d$ and all multi-indices $(\beta_1,\cdots,\beta_d)\in \NN^d$ with $\sum_{i=1}^d \beta_i = s$, where $\beta = s+\alpha$ with $s\in \NN, \alpha\in (0,1]$. Throughout we assume that the radius $L$ is a fixed positive constant and omit the dependence on $L$. 
\end{definition}

Assumption \ref{assu:smoothness} imposes no structural assumptions on $\mu_0$ and $\tau$ except for the smoothness, and assumes that the HTE $\tau$ is smoother and thus easier to estimate than the baseline $\mu_0$. This typically holds as intuitively fewer factors contribute to the HTE than baselines, and semiparametric models usually assume simple forms of HTEs such as constant or linear \cite{kennedy2016semiparametric,chernozhukov2018double}, where Assumption \ref{assu:smoothness} could be treated as a nonparametric counterpart. Standard results in nonparametric estimation reveal that if one na\"{i}vely estimates the baselines $\mu_0$ and $\mu_1$ separately based on \eqref{eq.treatment_model}, then each function can be estimated within accuracy $\Theta(n^{-\beta_\mu / (2\beta_\mu + d)})$, and so is the difference $\tau$ in \eqref{eq.HTE}. However, if the covariates in the control and treatment groups match perfectly, i.e. $X_i^0 = X_i^1$ for all $i\in [n]$, then differencing \eqref{eq.treatment_model} gives an improved estimation accuracy $\Theta(n^{-\beta_\tau / (2\beta_\tau + d)})$ for the HTE $\tau$. In general, the estimation performance of HTE is an interpolation between the above extremes and depends heavily on the quality of the covariate matching. To model such qualities, we have the following assumption on the covariates $X_i^0, X_i^1$. 
\begin{assumption}[Covariates]\label{assu:covariate}
	The covariates are generated under fixed or random designs below: 
	\begin{itemize}
		\item \emph{Fixed design:} the covariates are generated from the following fixed grid
		\begin{align*}
		\left\{X_i^0 \right\}_{i\in [n]} &= \left\{ 0, \frac{1}{m}, \cdots, \frac{m-1}{m} \right\}^d,\quad \left\{X_i^1 \right\}_{i\in [n]} = \left\{ 0, \frac{1}{m}, \cdots, \frac{m-1}{m} \right\}^d + \Delta,
		\end{align*}
		for some vector $\Delta \in \bR^d$ with $\|\Delta\|_\infty \le 1/(2m)$, and $m\triangleq n^{1/d}$ is assumed to be an integer. 
		\item \emph{Random design:} the covariates are i.i.d. sampled from unknown densities $g_0, g_1$ on $[0,1]^d$: 
		\begin{align*}
		X_1^0, X_2^0, \cdots, X_n^0 \overset{\text{\rm i.i.d.}}{\sim} g_0, \quad X_1^1, X_2^1, \cdots, X_n^1 \overset{\text{\rm i.i.d.}}{\sim} g_1,
		\end{align*}
		where there is a bounded likelihood ratio $\kappa^{-1}\le g_0(x)/g_1(x)\le \kappa$  on the densities.
	\end{itemize}
\end{assumption}

Under the fixed design, the covariates in both groups are evenly spaced grids in $[0,1]^d$, with a shift $\Delta$ quantifying the matching distance between the control and treatment groups. The fixed design is not very practical, but the analysis will provide important insights for the HTE estimation. The random design model is more meaningful and realistic without any matching parameter, and the density ratio $g_0/(g_0+g_1)$ corresponds to the propensity score, a key quantity in causal inference. We assume that $\kappa^{-1} \le g_0/g_1 \le \kappa$ everywhere, as it is usually necessary to have a propensity score bounded away from $0$ and $1$. Besides the bounded likelihood ratio, we remark that it is \emph{not} assumed that the densities $g_0, g_1$ are bounded away from zero. 

Finally it remains to model the noises $\varepsilon_i^0, \varepsilon_i^1$, and we assume the following mild conditions. 
\begin{assumption}[Noise]\label{assu:noise}
	The noises $\varepsilon_i^0, \varepsilon_i^1$ are mutually independent, zero-mean and of variance $\sigma^2$. 
\end{assumption}

\begin{figure*}[t]
	\centering
	\begin{subfigure}[b]{0.45\textwidth}
		\includegraphics[width=\linewidth]{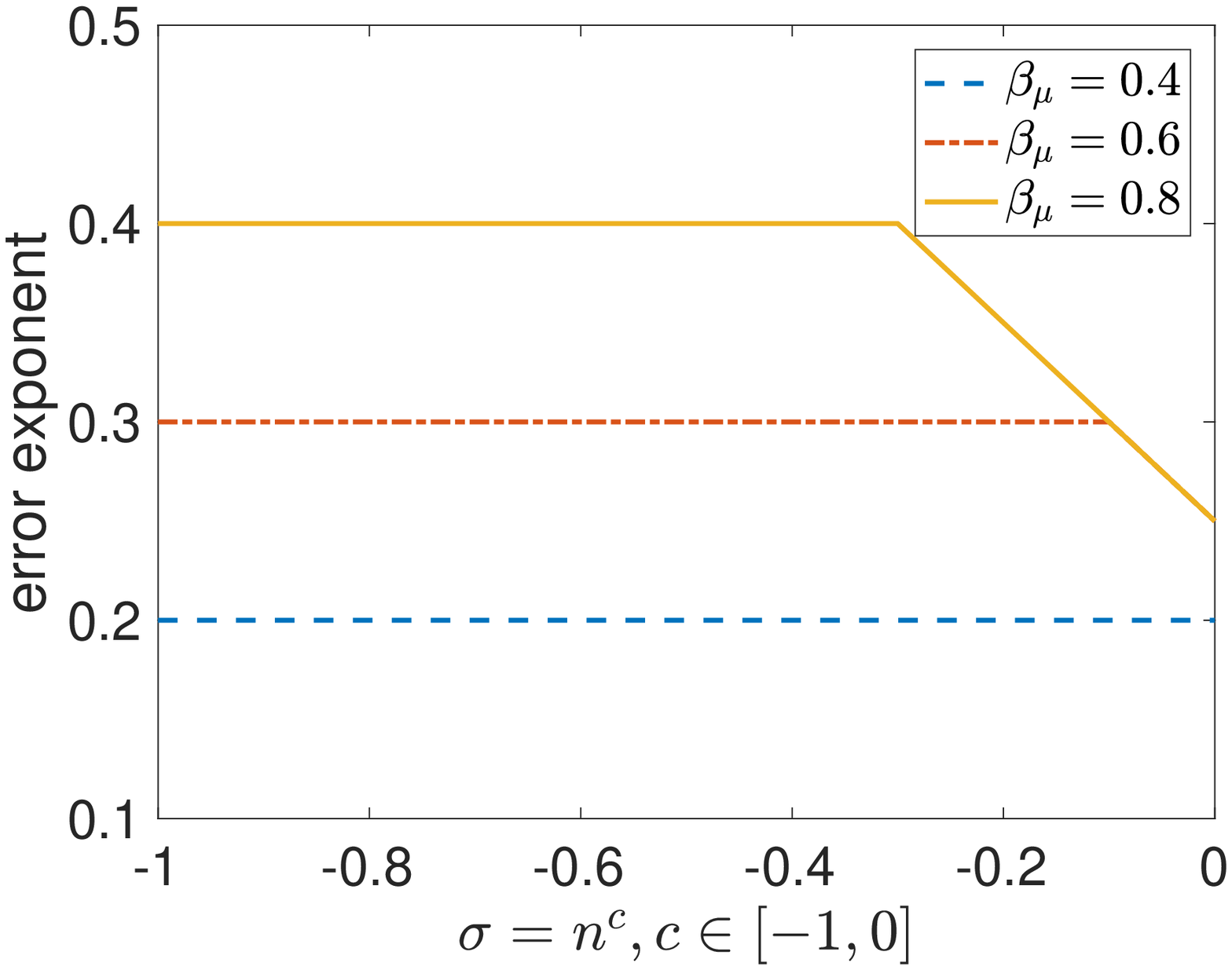}\caption{Fixed design.}
	\end{subfigure}
	\begin{subfigure}[b]{0.45\textwidth}
		\includegraphics[width=\linewidth]{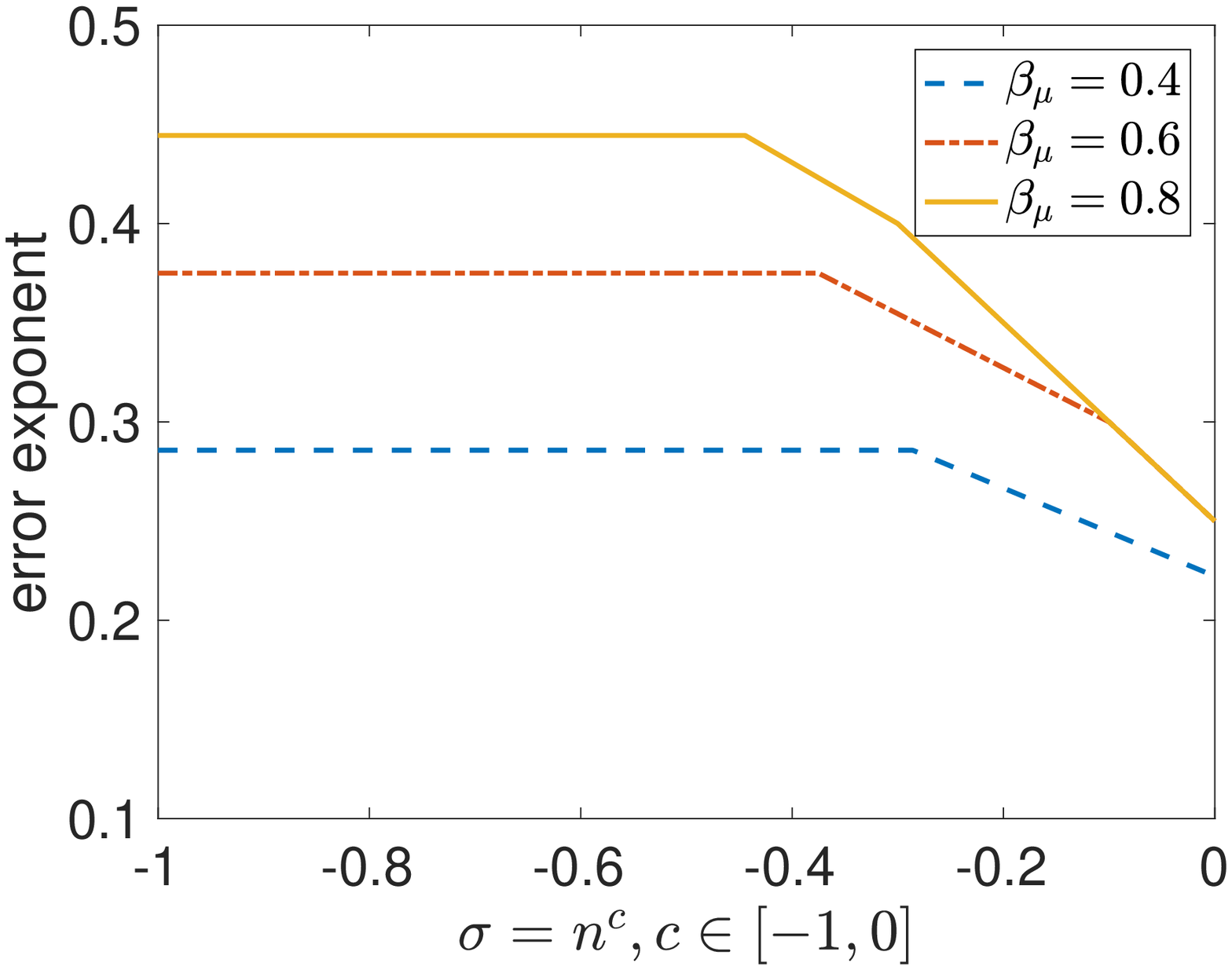}\caption{Random design.}
	\end{subfigure}
	\caption{Negated error exponent of the minimax rate of HTE estimation under both fixed and random designs, with $d=2$, $\beta_\mu \in \{0.4, 0.6, 0.8\}$, $\beta_\tau = 1$, $\kappa=\Theta(1)$, and $\sigma = n^c$ with $c\in [0,1]$. The error exponents exhibit at most two regimes under the fixed design, and three regimes under the random design.}
\end{figure*}\label{fig.minimax}

Based on the above assumptions, the target of this paper is to characterize the minimax risks of the nonparametric HTE estimation under both fixed and random designs. Specifically, we are interested in the following minimax $L_1$ risk
\begin{align*}
R_{n,d,\beta_\mu,\beta_\tau,\sigma}^{\text{fixed}}(\Delta) \triangleq \inf_{\hat{\tau}}\sup_{\mu_0\in \calH_d(\beta_\mu), \tau\in \calH_d(\beta_\tau)} \bE_{\mu_0,\tau} [ \|\hat{\tau} - \tau\|_1 ] 
\end{align*}
for fixed design with matching parameter $\Delta\in \bR^d$, where the infimum is taken over all possible estimators $\hat{\tau}$ based on the observations $\{(X_i^0, Y_i^0)\}_{i\in [n]}, \{(X_i^1, Y_i^1)\}_{i\in [n]}$. As for the minimax risk for random designs, we include the dependence on the density ratio $\kappa$ and use the $L_1$ norm based on the density $g_0$ (as we do not assume that the densities are bounded away from zero): 
\begin{align*}
R_{n,d,\beta_\mu,\beta_\tau,\sigma}^{\text{\rm random}}(\kappa)  \triangleq \inf_{\hat{\tau}}\sup_{\mu_0\in \calH_d(\beta_\mu), \tau\in \calH_d(\beta_\tau), g_0, g_1} \bE_{\mu_0,\tau} [ \|\hat{\tau} - \tau\|_{L_1(g_0)} ]. 
\end{align*}
The main aim of this paper is to characterize the tight minimax rates for the above quantities, and in particular, how they are determined by the covariate geometry and the full set of parameters $(n,\kappa,\sigma)$ of interest. In addition, we aim to extract useful practical insights (instead of fully practical algorithms) based on the minimax optimal estimation procedures established in theory. Moreover, we mostly focus on the special nature of the HTE estimation problem instead of general nonparametric estimation, and therefore we elaborate less on broad issues of nonparametric statistics such as adaptation/hyperparameter estimation and refer interested readers to known literature (see Section \ref{sec:discussion}). 

Our first result is the characterization of the minimax rate for HTE estimation under fixed designs. 
\begin{theorem}[Fixed Design]\label{thm.minimax_fixed}
	Under Assumptions \ref{assu:smoothness}--\ref{assu:noise},
	\begin{align*}
	R_{n,d,\beta_\mu,\beta_\tau,\sigma}^{\text{\rm fixed}}(\Delta) \asymp n^{-\frac{\beta_\mu}{d}}(n^{\frac{1}{d}}\|\Delta\|_\infty)^{\beta_\mu \wedge 1} + \left(\frac{\sigma^2}{n}\right)^{\frac{\beta_\tau}{2\beta_\tau + d}}. 
	\end{align*}
\end{theorem}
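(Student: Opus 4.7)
The plan is to establish matching upper and lower bounds in Theorem \ref{thm.minimax_fixed} by decomposing the rate into its two additive pieces: a ``matching bias'' term $n^{-\beta_\mu/d}(n^{1/d}\|\Delta\|_\infty)^{\beta_\mu \wedge 1}$, which encodes the fixed-grid offset, and a ``noisy regression'' term $(\sigma^2/n)^{\beta_\tau/(2\beta_\tau+d)}$, which encodes the intrinsic nonparametric rate for $\tau$.

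For the upper bound, I will use a matching-plus-smoothing estimator. First, for every treated sample pair it with the unique control at offset $-\Delta$ and form the differenced residual
\[
D_i = Y_i^1 - Y_{\pi(i)}^0 = \tau(X_i^1) + b(X_i^1) + \eta_i, \qquad b(x) := \mu_0(x) - \mu_0(x-\Delta),
\]
where $\eta_i$ has variance $2\sigma^2$. When $\beta_\mu \le 1$, the pointwise bound $|b(x)| \le L\|\Delta\|_\infty^{\beta_\mu}$ follows directly from H\"older continuity, so running local polynomial regression of degree $\lfloor \beta_\tau \rfloor$ on $\{(X_i^1, D_i)\}$ with bandwidth $h\asymp (\sigma^2/n)^{1/(2\beta_\tau+d)}$ gives the claim since $\|\Delta\|_\infty^{\beta_\mu} = n^{-\beta_\mu/d}(n^{1/d}\|\Delta\|_\infty)^{\beta_\mu}$. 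When $\beta_\mu > 1$, I additionally perform a bias correction: estimate $\nabla\mu_0$ on the control grid by local polynomial regression of degree $\lfloor\beta_\mu\rfloor$, and subtract $\hat b(x) := \int_0^1 \Delta \cdot \hat{\nabla\mu_0}(x - (1-t)\Delta)\,dt$ from each $D_i$. The integral identity $b-\hat b = \int_0^1 \Delta \cdot (\nabla\mu_0 - \hat{\nabla\mu_0})(x-(1-t)\Delta)\,dt$ converts the $L_\infty$ accuracy of $\hat{\nabla\mu_0}$ linearly into bias on $\tau$, yielding $\|b-\hat b\|_\infty \le \|\Delta\|_\infty\cdot n^{-(\beta_\mu-1)/d}$ in the noiseless-grid regime and the corresponding matching first term.

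For the lower bound, the second piece follows from the classical Fano-style minimax lower bound for nonparametric regression of a $\beta_\tau$-H\"older function on $n$ fixed-design points with noise variance $\sigma^2$ (take $\mu_0\equiv 0$, so the treated observations are exactly such a regression problem). For the matching term I use a two-hypothesis Le Cam argument in which the two observation laws are \emph{identical}. Take $(\mu_0^{(1)},\tau^{(1)})=(0,0)$. Choose a profile $f:[0,1]^d\to\bR$ with $f\equiv 0$ near $\partial[0,1]^d$, $f(0)=0$, and $|f(m\Delta)|\asymp (m\|\Delta\|_\infty)^{\beta_\mu \wedge 1}$: for $\beta_\mu\le 1$ take $f(y)\propto y_{j^\star}^{\beta_\mu}$ cut off smoothly, with $j^\star$ a coordinate attaining $\|\Delta\|_\infty$; for $\beta_\mu>1$ take a smooth profile with nonvanishing gradient at $0$. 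Paste $h\cdot f(m\cdot)$ into each grid cell with amplitude $h\asymp n^{-\beta_\mu/d}$ chosen so that $\|\mu_0^{(2)}\|_{\calH_d(\beta_\mu)}\le L$; the strict interior support of $f$ keeps the construction globally H\"older. By design, $\mu_0^{(2)}$ vanishes at every control grid point, and setting $\tau^{(2)}\equiv -hf(m\Delta)$ (a constant, trivially in $\calH_d(\beta_\tau)$) forces $\mu_1^{(2)}=\mu_0^{(2)}+\tau^{(2)}$ to vanish at every treated grid point. Hence both hypotheses produce the same distribution over observations, and Le Cam gives
\[
R_{n,d,\beta_\mu,\beta_\tau,\sigma}^{\text{fixed}}(\Delta)\ \ge\ \tfrac12\|\tau^{(1)}-\tau^{(2)}\|_1\ \asymp\ h\,|f(m\Delta)|\ \asymp\ n^{-\beta_\mu/d}(n^{1/d}\|\Delta\|_\infty)^{\beta_\mu\wedge 1}.
\]

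The main obstacle will be the bias-correction analysis in the upper bound when $\beta_\mu>1$: the noisy gradient estimation has $L_\infty$ rate $(\sigma^2/n)^{(\beta_\mu-1)/(2\beta_\mu+d)}$, which need not beat the noiseless rate $n^{-(\beta_\mu-1)/d}$. I will handle this by exploiting that the correction enters \emph{linearly} in the small quantity $\|\Delta\|_\infty$ and by a case split: in the low-noise regime the noiseless grid rate already matches the first term, while in the high-noise regime the extra contribution $\|\Delta\|_\infty(\sigma^2/n)^{(\beta_\mu-1)/(2\beta_\mu+d)}$ can be absorbed into the second additive term using the monotonicity $\beta_\tau/(2\beta_\tau+d)\ge (\beta_\mu-1)/(2\beta_\mu+d)$ together with $\|\Delta\|_\infty\le 1$. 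A secondary subtlety is constructing for $\beta_\mu>1$ a profile $f$ that is simultaneously smooth enough to glue H\"older-continuously across cells and linear near $0$, which is achieved by localizing the bump strictly inside each cell so that boundary derivatives automatically vanish.
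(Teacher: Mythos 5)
Your lower bound is essentially the paper's: a Le Cam two-point argument in which one constructs $(\mu_0,\tau)$ vanishing at every control and treatment covariate while $\|\tau\|_1$ is as large as possible, making the two hypotheses indistinguishable from the data. The profile-pasting idea is the same. However, for $\beta_\mu>1$ your construction as stated is internally contradictory: you require $f$ to have \emph{strict interior support} in $[0,1]^d$ (so the cell-by-cell pasting is globally smooth) \emph{and} to have nonvanishing gradient at $0$ so that $|f(m\Delta)|\asymp m\|\Delta\|_\infty$; but $0\in\partial[0,1]^d$, so $f\equiv 0$ near the boundary forces $f(m\Delta)=0$ for all small $\Delta$. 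You need to re-center each cell so that $X_i^0$ sits strictly inside the cell's support (or use a periodic profile). A second, smaller slip: a nonzero constant $\tau^{(2)}$ is not in $\calH_d(\beta_\tau)$ under the paper's definition (functions supported on $[0,1]^d$); you need to multiply by a smooth cutoff that is $\equiv 1$ on $[1/4,3/4]^d$, as the paper does.

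The more substantial divergence is in the upper bound for $\beta_\mu>1$. The paper forms a pseudo-observation $\hat{Y}^1(X_i^0)$ at each control point by a \emph{local polynomial interpolation} of $O(1)$ nearby \emph{treatment} observations, with moment-cancelling weights chosen purely from the grid geometry. The key structural fact (Lemma \ref{lemma.weights}) is that the nearest-neighbor weight is $O(1)$ and the others are $O(m\|\Delta\|_\infty)$, so the pseudo-observation has variance $O(\sigma^2)$ independent of any bandwidth, while its bias is $O(m^{-\beta_\mu}(m\|\Delta\|_\infty)^{\beta_\mu\wedge 1})$ deterministically. There is therefore only one bias--variance tradeoff, in the final kernel smoothing. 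Your proposal instead estimates $\nabla\mu_0$ from the \emph{noisy control outcomes}, which introduces a second bias--variance tradeoff with rate $(\sigma^2/n)^{(\beta_\mu-1)/(2\beta_\mu+d)}$. Your case split does not cover all noise levels: absorption into the first term needs $\sigma\lesssim n^{-\beta_\mu/d}$ (so that bandwidth $1/m$ is optimal for $\nabla\mu_0$), while absorption into the second needs $\sigma^2/n \gtrsim n^{-1/(d\gamma)}$ with $\gamma:=\beta_\tau/(2\beta_\tau+d)-(\beta_\mu-1)/(2\beta_\mu+d)>0$. These thresholds do not meet in general (e.g.\ $d=1,\beta_\mu=2,\beta_\tau=3$ leaves a nontrivial gap), and in between $\|\Delta\|_\infty(\sigma^2/n)^{(\beta_\mu-1)/(2\beta_\mu+d)}$ strictly exceeds both target terms. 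The way to close this gap is to \emph{fix} the gradient-estimation bandwidth at the grid spacing $1/m$ and carry the resulting correlated noise through the final kernel average — which, once you do it carefully, collapses to exactly what the paper's interpolation construction gives you directly, without the detour through derivative estimation. Also be aware that $\hat{b}(X_i^1)$ and $D_i=Y_i^1-Y_{\pi(i)}^0$ share the noise $\varepsilon_{\pi(i)}^0$, so they are not independent; the paper's pseudo-observations sidestep this dependence entirely by never re-using control outcomes.
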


Theorem \ref{thm.minimax_fixed} shows that, as the covariate matching quality improves (i.e. $\|\Delta\|_\infty$ shrinks), the estimation error of the HTE decreases from $n^{-\beta_\mu/d}$, which is slightly better than the estimation error $n^{-\beta_\mu/(2\beta_\mu+d)}$ for the baselines, to the optimal estimation error $(\sigma^2/n)^{\beta_\tau/(2\beta_\tau+d)}$ when the learner has $n$ direct samples from $\tau$. We also remark that the covariate matching quality is determined by the $\ell_\infty$ norm of $\Delta$, and the matching bias does not depend on the noise level $\sigma$. 

The minimax rate for HTE estimation under random designs exhibits more interesting behaviors. 
\begin{theorem}[Random Design]\label{thm.minimax_random}
	Under Assumptions \ref{assu:smoothness}--\ref{assu:noise}, if $\beta_\tau\le 1$ and $\kappa\le n$, then
	\begin{align*}
	R_{n,d,\beta_\mu,\beta_\tau,\sigma}^{\text{\rm random}}(\kappa) = \widetilde{\Theta}\left( \left(\frac{\kappa}{n^2}\right)^{\frac{1}{d(\beta_\mu^{-1} + \beta_\tau^{-1})}} + \left(\frac{\kappa\sigma^2}{n^2}\right)^{\frac{1}{2+d(\beta_\mu^{-1} + \beta_\tau^{-1})}} + \left(\frac{\kappa\sigma^2}{n}\right)^{\frac{\beta_\tau}{2\beta_\tau + d}} \right). 
	\end{align*}
\end{theorem}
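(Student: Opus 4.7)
\emph{Plan.} The approach is to combine a two-stage nearest-neighbor (NN) matching estimator for the upper bound with Assouad-style lower-bound constructions tailored to three separate scales, one per term. The three terms correspond to three regimes: matching bias dominates (first term, essentially the $\sigma\to 0$ limit), matching bias and stochastic noise are comparable (second term), or direct nonparametric estimation of $\tau$ from noisy matched pairs dominates (third term, when $\sigma$ is large). The extra smoothness $\beta_\tau \ge \beta_\mu$ of $\tau$ is used by smoothing the pseudo-differences, and the density-ratio constant $\kappa$ enters only as a multiplicative factor through the change of measure between $g_0$ and $g_1$.

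\emph{Upper bound.} For each $i\in[n]$, match $X_i^1$ to its nearest control covariate $X_{\mathrm{NN}(i)}^0$ at distance $\delta_i$ and form the pseudo-difference
\[
D_i = Y_i^1 - Y_{\mathrm{NN}(i)}^0 = \tau(X_i^1) + \bigl[\mu_0(X_i^1) - \mu_0(X_{\mathrm{NN}(i)}^0)\bigr] + (\varepsilon_i^1 - \varepsilon_{\mathrm{NN}(i)}^0).
\]
Discard any pair with $\delta_i > r$ and apply a local polynomial smoother of order $\lfloor\beta_\tau\rfloor$ with bandwidth $h$ to the surviving $D_i$'s to obtain $\hat\tau$. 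The $L_1(g_0)$-risk then decomposes into (i) a smoothing bias $\asymp h^{\beta_\tau}$ from the H\"{o}lder property of $\tau$; (ii) a matching bias $\asymp r^{\beta_\mu}$ from the H\"{o}lder property of $\mu_0$ (using $\beta_\mu \le \beta_\tau \le 1$); (iii) a noise variance $\asymp \sigma^2/(nh^d g_1(x))$ whose $L_1(g_0)$-integral picks up a $\sqrt{\kappa}$ factor via the bounded likelihood ratio; and (iv) a residual from discarded or low-density regions, to be absorbed by a maximal inequality. Choosing $(h,r)$ separately in each regime and taking the maximum of the resulting errors yields the three claimed terms.

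\emph{Lower bound.} Apply Assouad's lemma to hypothesis families $\{(\mu_0^{(\omega)}, \tau^{(\omega)}) : \omega\in\{0,1\}^M\}$ indexed by localized H\"{o}lder bumps on disjoint subcubes. For the third term the construction is standard: perturb only $\tau$ at scale $h = (\kappa\sigma^2/n)^{1/(2\beta_\tau+d)}$. For the first two terms the bumps must be placed on both $\mu_0$ and $\tau$ in a coordinated way, so that the control sample is consistent with many distinct $(\mu_0,\tau)$ pairs and the only way to distinguish $\omega$ is through matched pairs of small separation. The $\kappa$-dependence is then extracted by concentrating $g_0, g_1$ on a subset where $g_0/g_1$ saturates the ratio $\kappa$.

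\emph{Main obstacle.} The principal difficulty is the $L_1(g_0)$-analysis in low-density regions: with only a bounded likelihood ratio $\kappa$ and no density lower bound, the NN distances $\delta_i$ can be unbounded and the number of well-matched pairs per smoothing ball can be arbitrarily small. I would resolve this by a dyadic peeling over density levels $\{g_0(x) \in [2^{-k},2^{-k+1})\}$ combined with a new maximal inequality controlling the deviation of the empirical NN-count process uniformly across these levels, then converting counts between the treatment and control samples via the bounded ratio $\kappa$. This maximal inequality is the technical innovation highlighted in the abstract and is what upgrades the naive $n$-scaling of the matching-bias contribution to the $n^2$-scaling that appears in the first two terms of the rate.
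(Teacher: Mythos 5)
Your high-level outline (two-stage NN matching with discarding, Assouad/Fano lower bound with coordinated perturbations, a maximal inequality to handle low-density regions, and the $\kappa$ entering via change of measure) matches the spirit of the paper, but there are two substantive gaps that leave the proof incomplete.

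\emph{Upper bound.} Your estimator is not the paper's. You discard matched pairs by a single global threshold $r$ and then smooth the survivors at a fixed bandwidth $h$. The paper's Algorithm~2 is different in a crucial way: for each query point it first collects the $m_1$ nearest control covariates, then keeps only the $m_2$ control points (out of those $m_1$) whose $1$-NN treatment distance is smallest. The selection is \emph{local and rank-based}, not a fixed distance cutoff, which automatically adapts to the local density: in a sparse region the retained matching radius grows, but the number of retained pairs stays at $m_2$. With your fixed $r$, the number of surviving pairs inside a smoothing ball is random and can be arbitrarily small (or zero) in low-density regions, so both the smoothing-bias and variance terms lose their deterministic form. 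You flag this in the ``main obstacle'' paragraph and propose a dyadic peeling plus a new maximal inequality, but this is precisely the hard step and is left unresolved. The paper's maximal inequality (Lemma~\ref{lemma:minimal_inequality}) has a specific form --- it bounds $\int g_0(x)\,e^{-\lambda\, m[g_0](x)}\,dx$ and is applied to the tail $\bP(d_i\ge t)$ of the local $1$-NN distance --- which is not the ``uniform control of an empirical NN-count process across density levels'' you describe; the latter would be a more difficult statement.

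\emph{Lower bound.} This is where the gap is most serious. The first term $(\kappa/n^2)^{1/(d(\beta_\mu^{-1}+\beta_\tau^{-1}))}$ is proved in the paper not by Assouad but by Le Cam's two-point method, and the construction hinges entirely on Lemma~\ref{lemma.holder}: a value specification $\{(x_i,y_i)\}$ extends to a $\beta$-H\"older function iff $|y_i-y_j|\le L\|x_i-x_j\|_2^\beta$ for all pairs. This is what makes it possible to build $(\mu_0,\tau)$ on an \emph{irregular} random point set satisfying the indistinguishability constraints $\mu_0(X_i^0)=0$ and $\mu_0(X_i^1)+\tau(X_i^1)=0$ while keeping $\|\tau\|_1$ large. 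The paper's choice
\[
\mu_0(X_i^1)=-\tau(X_i^1)=c\cdot\min_{j\in[m]}\Bigl(\|X_i^1-X_j^1\|_2^{\beta_\tau}+\min_{k\in[n]}\|X_j^1-X_k^0\|_2^{\beta_\mu}\Bigr)
\]
is engineered precisely so that the pairwise H\"older criterion holds at both smoothness levels, and the $n^2$ scaling and the combined exponent $d(\beta_\mu^{-1}+\beta_\tau^{-1})$ both emerge from analyzing this min-type functional. Your proposal says only that ``bumps must be placed on both $\mu_0$ and $\tau$ in a coordinated way''; without a criterion like Lemma~\ref{lemma.holder} it is not clear how to place coordinated H\"older bumps vanishing at all data points of a random design and still lower-bound their $L_1$ mass. (This is exactly the obstacle the paper flags in Section~\ref{sec:discussion} for $\beta>1$: the extension criterion is the bottleneck.) The second (Fano) term also reuses this criterion to build the offsetting baselines $\mu_0^v$. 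So while your plan names the right ingredients at the level of strategy, it is missing the one technical lemma that makes the lower-bound constructions possible.
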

Theorem \ref{thm.minimax_random} provides the first minimax analysis of the HTE estimation with tight dependence on all parameters $(n,\kappa,\sigma)$ without assuming densities bounded away from zero. Interestingly, Theorem \ref{thm.minimax_random} shows that there are three regimes of the minimax rate of the HTE estimation under random designs, and in particular, there is an intermediate regime where neither the matching bias nor the estimation of $\tau$ dominates. Moreover, when it comes to the dependence on the density ratio $\kappa$, the effective sample size is $n/\sqrt{\kappa}$ in the first two regimes, while it becomes $n/\kappa$ in the last regime. Examples of the minimax rates of HTE estimation under fixed and random designs are illustrated in Figure \ref{fig.minimax}. 

\subsection{Related works}
The nonparametric regression of a single function has a long history and is a well-studied topic in nonparametric statistics, with various estimators including kernel smoothing \cite{rosenblatt1956remarks}, local polynomial fitting \cite{stone1977consistent,fan1997local}, splines \cite{de1978practical,green1993nonparametric}, wavelets \cite{donoho1995wavelet}, nearest neighbors \cite{fix1951discriminatory,stone1977consistent}, and so on. We refer to the excellent books \cite{nemirovski2000topics,gyorfi2006distribution,tsybakov2008introduction,johnstone2011gaussian,biau2015lectures} for an overview. However, estimating the difference of two nonparametric functions remains largely underexplored. 

In the history of causal inference, the average treatment effect (ATE) has long been the focus of research \cite{imbens2015causal}. Recently, an increasing number of works seek to estimate the HTE, and there are two major threads of approaches: parametric (semi-parametric) estimation and nonparametric estimation. 

In parametric estimation, a variety of parametric assumptions such as linearity are imposed on the baseline functions and HTE. The problem then reduces to classic parametric estimation and many methods are applicable, e.g. \cite{Lu2014Simple,imai2013estimating}. In this paradigm, classic parametric rate is obtained under proper regularity assumptions. To further deal with observational study and bias due to the curse of high-dimensionality, \cite{chernozhukov2018double} develops a double/debiased procedure allowing a less accurate estimation of the mean function without sacrificing the parametric rate. 

In nonparametric estimation, there are a large number of practical approaches proposed, such as spline-based method \cite{chen2007large}, matching-based method \cite{xie2012estimating} and tree-based method \cite{athey2016recursive,powers2017some,wager2018estimation}. However, relatively fewer work study the statistical limit of nonparametric estimation of HTE. One work \cite{nie2017quasi} argued that a crude estimation of baselines suffices for the optimal estimation of HTE, while their approach did not take into account the impact of the covariate geometry. Some works \cite{alaa2018limits,Kunzel4156} studied the minimax risk with smoothness assumptions on the baseline functions, but they did not directly model the smoother HTE function and thus arrived at a relatively easier claim that the difficulty is determined by the less smooth baseline. In contrast, we assume a smoother HTE and provide additional insights on how to construct pseudo-observations and discarding poor-quality data based on the covariate geometry. Hence, to the best of our knowledge, this paper establishes the first minimax rates with tight dependence on both the covariate geometry, and the parameters $\kappa$ related to propensity scores as well as $\sigma$ related to noise levels. 

We also comment that in nonparametric regression problems with random design, a uniform lower bound on the density is usually necessary in most past works \cite{brown2002asymptotic,gyorfi2006distribution}. To overcome this difficulty, there is a recent line of research showing that to estimate nonparametric functionals, Hardy--Littlewood-type maximal inequalities are often useful to deal with densities close to zero for both kernel methods \cite{han2017optimal} and nearest neighbors \cite{jiao2018nearest}. Therefore, this work is a continuation of the above thread with a novel maximal inequality on another different problem.  

\subsection{Notations}
Let $\RR, \NN$ be the set of all real numbers and non-negative integers, respectively. For $p\in [1,\infty]$, we denote by $\|\cdot\|_p$ both the $\ell_p$ norm for vectors and the $L_p$ norm for functions. Let $[n]\triangleq \{1,2,\cdots,n\}$, and $a\wedge b\triangleq \min\{a,b\}$. We call $K: \bR^d\to \bR$ a kernel of order $k$ if $\int K(x)dx =1$ and $\int (v^\top x)^\ell K(x)dx=0$ for any $v\in \bR^d$ and $1\le\ell\le k$. For non-negative sequences $\{a_n\}$ and $\{b_n\}$, we write $a_n\asymp b_n$, or $a_n = \Theta(b_n)$, to denote that $cb_n \le a_n \le Cb_n$ for some absolute constants $c,C>0$ independent of $n$. The notation $a_n = \widetilde{\Theta}(b_n)$ means that the above inequality holds within multiplicative polylogarithmic factors in $n$. 

\subsection{Organization}
In Section \ref{sec:fixedDesign}, we construct the minimax optimal HTE estimator via a combination of kernel methods and covariate matching under the simple fixed design setting. For the random design, a two-stage nearest-neighbor based algorithm is proposed in Section \ref{sec:randomDesign} to trade off two types of biases and the variance. The efficacy of the proposed estimator is shown via numerical experiments in Section \ref{sec:experiment}. Some limitations and future works are discussed in Section \ref{sec:discussion}, and all proofs are relegated to the Appendices. 

\section{Fixed Design}\label{sec:fixedDesign}
This section is devoted to the minimax rate of HTE estimation under fixed design. To construct the estimator, we first form pseudo-observations of the outcomes in the treatment group on the covariates in the control group based on covariate matching, and then apply the classic kernel estimator based on perfectly matched pseudo-observations. We also sketch the ideas of the minimax lower bound and show that both the matching bias and the estimation error are inevitable. 

\subsection{Estimator Construction}
Recall that in the perfect matching case, i.e. $\Delta=0$, the HTE function $\tau$ can be estimated by classic nonparametric estimators after taking the outcome difference $Y_i^1 - Y_i^0$ in \eqref{eq.treatment_model}. This basic idea will be used for general fixed designs: first, for each $i\in [n]$, we form pseudo-observations $\hat{Y}_i^1$ with target mean $\mu_1(X_i^0)$, where the observed covariate $X_i^1$ in the treatment group moves to the nearest covariate $X_i^0$ in the control group. Next, we apply the kernel estimator in the perfect matching case to the pseudo-difference $\hat{Y}_i^1 - Y_i^0$. The covariate matching step also makes use of a suitable kernel method, where the matching performance depends on the smoothness of the baseline function $\mu_0$. 

Specifically, the pseudo-observations $\hat{Y}_i^1$ are constructed as follows. Let $t \triangleq \lfloor \beta_\mu \rfloor + 1$, and fix a covariate $x$ in the set of control covariates. For each $j\in [d]$, let $x_{1,j},\cdots,x_{t,j}$ be the $t$ closest grid points in $\{\Delta_j, 1/m+\Delta_j, \cdots 1-1/m+\Delta_j\}$ to $x_{j}$, the $j$-th coordinate of $x$. Moreover, for each coordinate $j\in [d]$, we also compute the following weights $w_{1,j},\cdots,w_{t,j}$ such that 
\begin{align}\label{eq:weight}
\sum_{i=1}^t w_{i,j} (x_{i,j} - x_{j})^\ell = \mathbbm{1}(\ell = 0), \quad \forall 0\le \ell \le \lfloor \beta_\mu \rfloor.
\end{align}

To form the pseudo-observation of the treatment outcome $\hat{Y}^1(x)$ on the covariate $x$, we use the treatment observations $Y^1(x_i)$ for all multi-indices $i=(i_1,\cdots,i_d)\in [t]^d$ and $x_i \triangleq (x_{i,1},x_{i,2},\cdots,x_{i,d})$, with $w_i = w_{i,j}$ being the weight of the covariate $x_i$. Note that each $x_i$ belongs to the covariate grid of the treatment group and therefore $Y^1(x_i)$ is observable. Finally, given the above covariates $x_i$ and weights $w_i$, the pseudo-observation of the treatment outcome at the control covariate $x$ is 
\begin{align}\label{eq:pseudo}
\hat{Y}^1(x) = \sum_{i \in [t]^d} w_i Y^1(x_i)
\end{align}
In approximation theory, this is an interpolation estimate of $\mu_1(x)$ based on the function values $\{\mu_1(x_i)\}_{i\in [t]^d}$ in the neighborhood of $x$. 

Given the pseudo-observations, the final HTE estimator $\hat{\tau}(x_0)$ for any $x_0\in [0,1]^d$ is defined as
\begin{align}\label{eq.fixed_design_estimator}
\hat{\tau}(x_0) \triangleq \frac{\sum_{i=1}^n K((X_i^0-x_0)/h) (\hat{Y}^1(X_i^0) - Y_i^0) }{\sum_{i=1}^n  K((X_i^0-x_0)/h)},
\end{align}
i.e. the Nadaraya-Watson estimator \cite{nadaraya1964estimating,watson1964smooth} applied to the pseudo-differences, where $K$ is any kernel of order $\lfloor \beta_\tau\rfloor$, and $h\in (0,1)$ is a suitable bandwidth. The complete description of the estimator construction is displayed in Algorithm 1. 

\begin{algorithm}[tb]
	\caption{Estimator Construction under Fixed Design}\label{alg:fixed}
	\begin{algorithmic}
		\STATE {\bfseries Input:} control observations $\{(X_i^0, Y_i^0)\}_{i\in [n]}$, treatment observations $\{(X_i^1, Y_i^1)\}_{i\in [n]}$, dimensionality $d$,  kernel $K$ of order $\lfloor \beta_\tau \rfloor$, bandwidth $h>0$, query $x_0\in [0,1]^d$. 
		\STATE {\bfseries Output:} HTE estimator $\hat{\tau}(x_0)$ at point $x_0$. 
		\FOR{$x\in \{X_i^0\}_{i\in [n]}$}
		\STATE Choose $t = \lfloor \beta_\mu \rfloor + 1$, and find neighboring treatment covariates $\{x_i\}_{i\in [t]^d}$; 
		\STATE Compute the weights $\{w_i\}_{i\in [t]^d}$ according to \eqref{eq:weight}; 
		\STATE Compute the pseudo-observation $\hat{Y}^1(x)$ according to \eqref{eq:pseudo}. 
		\ENDFOR
		\STATE Output the final estimator $\hat{\tau}(x_0)$ according to \eqref{eq.fixed_design_estimator}.
	\end{algorithmic}
\end{algorithm}


The performance of the above estimator is summarized in the following theorem. 
\begin{theorem}\label{thm.fixed_upper}
	There exists a constant $C>0$ independent of $(n,h)$ such that
	\begin{align*}
	\sup_{\mu_0\in \calH_d(\beta_\mu), \tau\in \calH_d(\beta_\tau)} \bE_{\mu_0,\tau} [ \|\hat{\tau} - \tau\|_1 ] \le C\left(n^{-\frac{\beta_\mu}{d}}(n^{\frac{1}{d}}\|\Delta\|_\infty)^{\beta_\mu\wedge 1} + h^{\beta_\tau} + \frac{\sigma}{\sqrt{nh^d}}\right).
	\end{align*}
	In particular, the estimator $\hat{\tau}$ in \eqref{eq.fixed_design_estimator} achieves the upper bound in Theorem \ref{thm.minimax_fixed} for $ h = \Theta(n^{-1/(2\beta_\tau + d)})$. 
\end{theorem}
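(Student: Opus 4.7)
The plan is to decompose the pointwise error $\hat\tau(x_0) - \tau(x_0)$ into three pieces---the matching bias carried by the pseudo-observations $\hat Y^1$, the smoothing bias of the Nadaraya-Watson step, and the stochastic fluctuation---bound each separately, integrate over $x_0 \in [0,1]^d$, and balance the bandwidth $h$ at the end. Writing $\hat Y^1(X_i^0) = \sum_{j \in [t]^d} w_j \mu_1(x_j) + \sum_{j \in [t]^d} w_j \varepsilon^1(x_j)$, one sees that $\hat Y^1(X_i^0) - Y_i^0$ has conditional mean $\tau(X_i^0) + b_i$ and variance $O(\sigma^2)$, where the deterministic \emph{matching bias} $b_i := \sum_j w_j \mu_1(x_j) - \mu_1(X_i^0)$ is what I would bound first.

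The key lemma is the uniform estimate $\max_i |b_i| \lesssim n^{-\beta_\mu/d}(n^{1/d}\|\Delta\|_\infty)^{\beta_\mu \wedge 1}$, which I would prove in two regimes. Since $\mu_1 = \mu_0 + \tau \in \calH_d(\beta_\mu)$ up to a constant, for $\beta_\mu \le 1$ we have $t = 1$ and $b_i$ reduces to a single nearest-neighbor increment controlled by H\"{o}lder continuity: $|b_i| \le L\|\Delta\|_\infty^{\beta_\mu}$. For $\beta_\mu > 1$ I would proceed coordinate by coordinate: by the tensor-product structure of the weights in \eqref{eq:weight}, the telescoping identity $\hat f - f = \sum_{k=1}^d I_1\cdots I_{k-1}(I - I_k)f$, with $I_k$ the 1D interpolation operator in the $k$-th coordinate, reduces the bound to the 1D case. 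In one dimension the Newton/divided-difference form of the Lagrange error reads
\[ B_{\mathrm{1D}}(x) = [\tilde x_1, \ldots, \tilde x_t, x]\,\mu_1 \cdot \prod_{j=1}^t (x - \tilde x_j), \]
and I would exploit two facts: (i) the product contains one small factor $x - \tilde x_1 = O(\|\Delta\|_\infty)$ (distance to the nearest treatment grid point) together with $t-1$ factors of order $1/m$, yielding $O(\|\Delta\|_\infty m^{-(t-1)})$; (ii) a Peano-kernel/integral-remainder argument controls the divided difference of $\mu_1$ by the H\"{o}lder continuity of $\mu_1^{(s)}$ (where $\beta_\mu = s+\alpha$), giving $O(m^{1-\alpha})$. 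Multiplying and using $t = s + 1$ then yields $O(\|\Delta\|_\infty m^{1-\beta_\mu})$, which matches the claimed rate in the $\beta_\mu \wedge 1 = 1$ case.

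Given the uniform bound $b := \max_i |b_i|$, the remaining analysis is classical kernel regression on observations $\hat Y^1(X_i^0) - Y_i^0 = \tau(X_i^0) + b_i + \eta_i$ placed on the regular grid with spacing $n^{-1/d}$. Regularity of the design forces $\sum_i K((X_i^0 - x_0)/h) = \Theta(nh^d)$ uniformly in $x_0$ outside a boundary strip of width $O(h)$, which contributes negligibly to the $L_1$ risk. Using the order-$\lfloor \beta_\tau\rfloor$ property of $K$ together with a Taylor expansion of $\tau$ to that order, the smoothing bias is the standard $O(h^{\beta_\tau})$, while the matching bias passes through to an additional $O(b)$ term. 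For the variance, the independent control noises $\{\varepsilon_i^0\}$ contribute $O(\sigma^2/(nh^d))$ by the usual kernel calculation; the pseudo-noise terms $\sum_j w_j \varepsilon^1(x_j)$ are only weakly correlated across $i$, since each treatment point $x_j$ belongs to the interpolation stencils of only a constant number of control points, so a band-matrix Cauchy-Schwarz bound delivers the same $O(\sigma^2/(nh^d))$ rate.

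Assembling bias and standard deviation, integrating over $x_0$, and choosing $h = \Theta(n^{-1/(2\beta_\tau + d)})$ then yields the displayed inequality. The step I expect to be the most delicate is the divided-difference bound for $\beta_\mu > 1$ under only H\"{o}lder (rather than classical) differentiability: the textbook mean-value representation of divided differences requires $\mu_1^{(s+1)}$ to exist, so I would instead route the argument through an integral/Peano-kernel representation that uses only the $\alpha$-H\"{o}lder continuity of $\mu_1^{(s)}$. A secondary subtlety is checking that each intermediate function $I_1\cdots I_{k-1}(I-I_k)f$ in the $d$-dimensional telescoping retains the full $\beta_\mu$-smoothness in the relevant coordinate so that the 1D estimate applies uniformly; the rest of the proof is essentially mechanical.
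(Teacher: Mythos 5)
Your proposal is correct, and the overall skeleton (matching bias plus smoothing bias plus variance, weak dependence of the pseudo-noises, bandwidth balancing) is the same as the paper's. Where you genuinely diverge is in how you bound the matching bias $b_\mu(X_i^0)$ for $\beta_\mu>1$. The paper performs a full multivariate Taylor expansion of $\mu_1$ around $X_i^0$ to order $\lfloor\beta_\mu\rfloor$, uses the tensor-product structure of the weights in \eqref{eq:weight} to reduce the residual to a product of one-dimensional sums $\sum_{j}|w_{j,k}||x_{j,k}-X^0_{i,k}|^{\beta_k}$, and then invokes an explicit bound on the Lagrange weights (Lemma~\ref{lemma.weights}: $|w_0|=O(1)$, $|w_j|=O(m\Delta)$ for $j\geq 1$) to control each factor by $m^{-\beta_k}(m\|\Delta\|_\infty)^{\beta_k\wedge1}$. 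You instead telescope $\hat f - f = \sum_{k=1}^d I_1\cdots I_{k-1}(I-I_k)f$ and bound the one-dimensional Newton/divided-difference remainder directly; this bypasses the explicit weight bounds (the Newton form carries no weights) but trades them for a bound on the Lebesgue constant of each $I_j$, which amounts to the same estimate $\sum_i|w_{i,j}|=O(1)$ that Lemma~\ref{lemma.weights} delivers. Your divided-difference calculation is right when you expand in the direction that divides by the farthest node ($|x-\tilde x_t|\asymp 1/m$ rather than the matched one at distance $\Delta$), giving $O(m^{1-\alpha})$ for the $(s+1)$-th divided difference times $O(\|\Delta\|_\infty m^{-s})$ for the product, which correctly recovers $\|\Delta\|_\infty m^{1-\beta_\mu}$. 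Two small inaccuracies worth flagging: (i) $t=\lfloor\beta_\mu\rfloor+1$ equals $2$, not $1$, when $\beta_\mu=1$, though your Newton-form argument covers that boundary case seamlessly; and (ii) the ``secondary subtlety'' you worry about (intermediate functions retaining $\beta_\mu$-smoothness) evaporates with the ordering you actually wrote, since $(I-I_k)$ is applied directly to $f$ before any other $I_j$ acts, so you only ever need the uniform boundedness of $I_1\cdots I_{k-1}$ in sup norm. Net effect: your route is slightly cleaner in that it avoids the set-$\Gamma$ bookkeeping of mixed multi-indices, while the paper's route is more self-contained in $d$ dimensions at the cost of that bookkeeping and the explicit weight lemma.
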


We sketch the proof idea of Theorem \ref{thm.fixed_upper} here. Using the definition of pseudo-observations $\hat{Y}^1$, for each $i\in [n]$ we have $
\hat{Y}^1(X_i^0) - Y_i^0 = \tau(X_i^0) + b_\mu(X_i^0) + \tilde{\varepsilon}_i,
$
where $\tau(X_i^0)$ is the target treatment effect at the point $X_i^0$, $b_\mu(x) = \sum_{i\in [t]^d} w_i \mu_1(x_i) - \mu_1(x)$ is the matching bias incurred by the linear interpolation, and $\tilde{\varepsilon}_i$ is a linear combination of the error terms. Then the first term of Theorem \ref{thm.fixed_upper} is an upper bound of the matching bias, while the other terms come from the traditional bias-variance tradeoff in nonparametric estimation. We leave the full proof to the appendix. 

%

\subsection{Minimax Lower Bound}
In this section, we show that the above HTE estimator is minimax optimal via the following minimax lower bound. 
\begin{theorem}\label{thm.fixed_lower}
	There exists a constant $c>0$ independent of $n$ such that for any HTE estimator $\hat{\tau}$, it holds that under Gaussian noise, 
	\begin{align*}
	\sup_{\mu_0\in \calH_d(\beta_\mu), \tau\in \calH_d(\beta_\tau)} \bE_{\mu_0,\tau} [ \|\hat{\tau} - \tau\|_1 ] \ge c\left(n^{-\frac{\beta_\mu}{d}}(n^{\frac{1}{d}}\|\Delta\|_\infty)^{\beta_\mu\wedge 1} + \left(\frac{\sigma^2}{n}\right)^{\frac{\beta_\tau}{2\beta_\tau+d}} \right).
	\end{align*}
\end{theorem}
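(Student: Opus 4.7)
The target bound is a sum of two terms, and since $a+b \le 2\max(a,b)$, it suffices to lower bound the minimax risk by a constant multiple of each term separately.

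\textbf{Nonparametric estimation term.} Restrict the supremum to $\mu_0 \equiv 0$, so that $Y_i^0 = \varepsilon_i^0$ carries no information. The problem reduces to estimating $\tau \in \calH_d(\beta_\tau)$ from $n$ Gaussian observations $Y_i^1 = \tau(X_i^1) + \varepsilon_i^1$ on the evenly spaced fixed grid $\{X_i^1\}$. This is a textbook nonparametric regression problem (see, e.g., \cite{tsybakov2008introduction}) whose minimax $L_1$ rate is $\Theta((\sigma^2/n)^{\beta_\tau/(2\beta_\tau+d)})$; the lower bound follows from Assouad's lemma applied to a hypercube of $\pm$-perturbed H\"older bumps supported on $\Theta((n/\sigma^2)^{d/(2\beta_\tau+d)})$ disjoint subcells, each of amplitude $\Theta((\sigma^2/n)^{\beta_\tau/(2\beta_\tau+d)})$.

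\textbf{Matching bias term.} Apply Le Cam's two-point method with identically distributed observations. Let $j^\star \in [d]$ attain $|\Delta_{j^\star}| = \|\Delta\|_\infty$, write $h = n^{-1/d}$, and define an $h$-periodic one-dimensional ``wiggle'' $g(x) = c\, h^{\beta_\mu}\Phi(x/h)$, where $\Phi:\bR\to\bR$ is a fixed $1$-periodic $\calH_1(\beta_\mu)$ function satisfying $\Phi(0)=0$ and $|\Phi(\Delta_{j^\star}/h)| \asymp (|\Delta_{j^\star}|/h)^{\beta_\mu\wedge 1}$. For $\beta_\mu \le 1$, choose a profile with H\"older singularity of exponent $\beta_\mu$ at the integers (e.g., $\Phi(t) \propto t^{\beta_\mu}(1-t)^{\beta_\mu}$ on one period); for $\beta_\mu > 1$, take a smooth periodic profile such as $\Phi(t) = \sin(2\pi t)$, so the first-order Taylor expansion at $0$ gives the linear scaling $\Phi(\Delta_{j^\star}/h) \asymp \Delta_{j^\star}/h$. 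A direct scaling computation shows $\|g\|_{\calH_1(\beta_\mu)} \lesssim c$, and by construction $g(k/m)=0$ and $g(k/m + \Delta_{j^\star}) = v$ for every $k$, with the common value $|v| = c\, h^{\beta_\mu}|\Phi(\Delta_{j^\star}/h)| \asymp n^{-\beta_\mu/d}(n^{1/d}\|\Delta\|_\infty)^{\beta_\mu\wedge 1}$.

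Fix a smooth cutoff $\chi : \bR^d \to [0,1]$ supported on $[0,1]^d$, identically $1$ on $[1/4, 3/4]^d$ and vanishing to infinite order at the boundary, and consider the two hypotheses
\begin{equation*}
H_1:\,(\mu_0, \tau) = (0, 0), \qquad H_2:\,(\mu_0, \tau) = \bigl(\chi(x)\,g(x_{j^\star}),\; -v\,\chi(x)\bigr).
\end{equation*}
The Leibniz rule yields $\mu_0^\star = \chi g \in \calH_d(\beta_\mu)$ and $\tau^\star = -v\chi \in \calH_d(\beta_\tau)$ with norms bounded by an absolute constant, provided $c$ is chosen small. At every control grid point $\mu_0^\star(X_i^0) = \chi(X_i^0)\cdot 0 = 0$, while at every treatment grid point $\mu_1^\star(X_i^1) = \chi(X_i^1)\bigl(g(X_{i,j^\star}^1)-v\bigr) = \chi(X_i^1)(v-v) = 0$. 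Hence the full observation vectors under $H_1$ and $H_2$ are both distributed as $\calN(0,\sigma^2)^{\otimes 2n}$, i.e., the two distributions coincide exactly. Le Cam's inequality (with zero total variation) gives
\begin{equation*}
\inf_{\hat\tau}\max_{j \in \{1,2\}} \bE_{H_j} \|\hat\tau - \tau^{(j)}\|_1 \;\ge\; \tfrac{1}{2}\|\tau^{(2)} - \tau^{(1)}\|_1 \;=\; \tfrac{|v|}{2}\|\chi\|_1 \;\asymp\; |v|,
\end{equation*}
which is the desired matching-bias lower bound.

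\textbf{Main obstacle.} The core technical point is designing $\Phi$ so that it simultaneously has bounded $\calH_1(\beta_\mu)$ norm, glues smoothly across periods (forcing derivatives up to order $\lfloor\beta_\mu\rfloor$ to match at the endpoints), and yet attains $|\Phi(\Delta_{j^\star}/h)| \asymp (|\Delta_{j^\star}|/h)^{\beta_\mu\wedge 1}$. The transition at $\beta_\mu = 1$ is precisely the source of the $\beta_\mu\wedge 1$ exponent: for $\beta_\mu \le 1$, the H\"older condition itself caps $|\Phi(t)|$ by $|t|^{\beta_\mu}$ near the zeros, whereas for $\beta_\mu > 1$ the first derivative can be of order one and the linear Taylor term dominates. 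Verifying the H\"older bound on the product $\chi g$ across the cutoff region is a routine but careful application of the multivariate Leibniz formula.
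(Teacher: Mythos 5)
Your proof is correct and follows essentially the same strategy as the paper: the estimation term comes from the textbook one-function lower bound after setting $\mu_0\equiv 0$, and the matching-bias term from a Le Cam two-point argument in which $\mu_0$ is a periodic wiggle of amplitude $\Theta(h^{\beta_\mu})$ at scale $h=n^{-1/d}$ that vanishes on the control grid and takes a common value $v\asymp h^{\beta_\mu}(\|\Delta\|_\infty/h)^{\beta_\mu\wedge 1}$ on the treatment grid, with $\tau$ equal to $-v$ times a fixed smooth cutoff so that all observations are pure noise under both hypotheses. The paper realizes the wiggle as a cell-by-cell sum of dilated copies of $c\,[x_1(1-x_1)]^{\beta_\mu\wedge 1}$; you package the same idea as a single $1$-periodic profile $\Phi$, which is tidier and in fact slightly more careful: for $\beta_\mu>1$ the paper's profile has unequal one-sided derivatives at the cell endpoints ($g'(0)=c$, $g'(1)=-c$), so the piecewise gluing is not $C^1$ and does not obviously lie in $\calH_d(\beta_\mu)$, whereas your smooth periodic choice such as $\Phi(t)=\sin(2\pi t)$ avoids this issue cleanly while still giving the linear scaling $|\Phi(\Delta_{j^\star}/h)|\asymp|\Delta_{j^\star}|/h$. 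The two verifications you flag as routine — the scaling of the H\"older seminorm of $g$ under dilation and the Leibniz-rule check that $\chi g\in\calH_d(\beta_\mu)$ — are indeed the only places where computation is needed and should be written out in a full proof.
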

As for the proof of Theorem \ref{thm.fixed_lower}, note that the estimation error $(\sigma^2/n)^{\beta_\tau/(2\beta_\tau+d)}$ is optimal in the nonparametric estimation of $\tau$ even if there is a perfect covariate matching. Hence, it remains to prove the first term, i.e. the matching bias. The proof is based on Le Cam's two-point method \cite{yu1997assouad} and a functional optimization problem. Consider two scenarios $(\mu_0, \tau)$ and $(\mu_0', \tau')$ with $\sigma=0$ and $\mu_0'(x) \equiv 0, \tau'(x) \equiv 0$, where $(\mu_0(x), \tau(x))$ is the solution to the following optimization problem: 
\begin{align}\label{eq.optimization}
\begin{split}
\text{maximize} \qquad &\|\tau\|_1 \\
\text{subject to}\qquad &\mu_0(X_i^0) = 0, \quad \mu_0(X_i^1) + \tau(X_i^1) = 0, \quad i=1,\cdots,n, \\
& \mu_0 \in \calH_d(\beta_\mu), \tau\in \calH_d(\beta_\tau). 
\end{split}
\end{align}
Note that when $(\mu_0, \tau)$ is a feasible solution to \eqref{eq.optimization}, under both scenarios $(\mu_0, \tau)$ and $(\mu_0', \tau')$ all outcomes are identically zero in both groups. Hence, these scenarios are completely indistinguishable, and it remains to show that the objective value of \eqref{eq.optimization} is at least the first error term. We defer the complete proofs to the appendix. 

\section{Random Design}\label{sec:randomDesign}
The random design assumption is more practical and complicated than the fixed design, as the nearest matching distances of different individuals are typically different without the regular grid structure. In particular, under the random design some covariates have better matching quality than others, in the sense that the minimum pairwise distance between the control and treatment covariates is $\Theta(n^{-2/d})$ while the typical distance is as large as $\Theta(n^{-1/d})$. In this section, we propose a two-stage nearest-neighbor estimator for HTE under the random design, and show that the minimax estimation error exhibits three different behaviors. 

\subsection{Estimator Construction}
In the previous section, we construct pseudo-observations at each control covariate using the same treatment grid geometry in the neighborhood of that covariate, thanks to the fixed grid design assumption. However, the local geometries in the random design may differ spatially. For example, for $n$ i.i.d. uniformly distributed random variables on $[0,1]$, the typical spacing between spatially adjacent values is $\Theta(n^{-1})$, while the minimum spacing is of the order $\Theta(n^{-2})$ with high probability. Therefore, the key insight behind the new estimator construction is to look for the best geometry in each local neighborhood. Specifically, we propose the following two-stage estimator: at the first stage we collect $m_1$ nearest control covariates of the query point, while at the second stage we find the $1$-nearest-neighbor treatment covariates for all above control samples and only pick $m_2\le m_1$ of them with desirably small nearest neighbor distances. Note that the nearest-neighbor estimator is a better candidate than the kernel-based estimator here, as the nearest-neighbor distance can adapt to different density regimes and result in adaptive bandwidths. 

\begin{algorithm}[tb]
	\caption{Estimator Construction under Random Design}\label{alg:random}
	\begin{algorithmic}
		\STATE {\bfseries Input:} control observations $\{(X_i^0, Y_i^0)\}_{i\in [n]}$, treatment observations $\{(X_i^1, Y_i^1)\}_{i\in [n]}$, dimensionality $d$, number of neighbors $m_1, m_2$ with $m_1\ge \kappa m_2$, query point $x_0\in [0,1]^d$. 
		\STATE {\bfseries Output:} HTE estimator $\hat{\tau}(x_0)$ at query point $x_0$. 
		\STATE Find $m_1$ nearest control covariates $ \{X_i^0\}_{i\in I_1}$ in the Euclidean distance to $x_0$, with $|I_1| = m_1$; 
		\FOR{$i\in I_1$}
		\STATE Compute the minimum distance $d_i = \min_{j\in [n]} \|X_i^0 - X_j^1 \|_2$ to the treatment covariate;
		\STATE Let $j(i)\in [n]$ be any minimizer to the above minimization program. 
		\ENDFOR
		\STATE Find the smallest $m_2$ elements of $(d_i)_{i\in I_1}$ and denote the index set by $I_2$, with $|I_2| = m_2$; 
		\STATE Return the final estimate $\hat{\tau}(x_0) =  m_2^{-1}\sum_{i\in I_2} (Y_{j(i)}^1 - Y_i^0)$. 
	\end{algorithmic}
\end{algorithm}


The detailed estimator construction is displayed in Algorithm \ref{alg:random}. Note that the first stage is similar in spirit to the kernel estimator, where we simply collect $m_1$ control samples in the small neighborhood and aim to take the (weighted) average. However, the second stage is very different in the sense that it asks to throw away samples with poor covariate matching quality to all treatment covariates and only keeps $m_2$ pairs of samples. Finally, a simple average is taken to the difference of the selected $m_1$ pairs, which is sufficient for smoothness $\beta_\tau \le 1$. Although Algorithm \ref{alg:random} can be applied to higher smoothness in principle, we will discuss the fundamental challenges of $\beta_\tau>1$ in Section \ref{sec:discussion}. 

The optimal choices of $(m_1,m_2)$ depend on the density ratio $\kappa$ and noise level $\sigma$ as follows: 
\begin{align*}
(m_1, m_2) = \widetilde{\Theta}\begin{cases}
(\kappa^{\frac{\beta_\mu}{\beta_\tau+\beta_\mu}}n^{\frac{\beta_\tau - \beta_\mu}{\beta_\tau + \beta_\mu}},1) & \text{if } \sigma \le \sigma_1\\
(n\cdot (\kappa\sigma^2/n^2)^{\frac{d\beta_\mu}{2\beta_\mu\beta_\tau + d(\beta_\mu+\beta_\tau)}},  (n^2/\kappa)^{\frac{2\beta_\mu\beta_\tau}{2\beta_\mu\beta_\tau + d(\beta_\mu+\beta_\tau)}}\sigma^{\frac{2d(\beta_\mu+\beta_\tau)}{2\beta_\mu\beta_\tau + d(\beta_\mu+\beta_\tau)}}  ) & \text{if } \sigma_1 < \sigma \le \sigma_2 \\
(n^{\frac{2\beta_\tau}{2\beta_\tau+d}} (\sigma^2\kappa)^{\frac{d}{2\beta_\tau+d}},(n/\kappa)^{\frac{2\beta_\tau}{2\beta_\tau+d}} \sigma^{\frac{2d}{2\beta_\tau+d}}) & \text{if }\sigma > \sigma_2
\end{cases}
\end{align*}
with noise thresholds $(\sigma_1, \sigma_2)$ given by 
\begin{align*}
\sigma_1 = (\kappa/n^2)^{1/d(\beta_\mu^{-1} + \beta_\tau^{-1})}, \quad \sigma_2 = n^{(\beta_\tau-\beta_\mu)/(2\beta_\tau)-\beta_\mu/d}/\sqrt{\kappa}. 
\end{align*}

The rationale behind the above choices lies in the following theorem. 

\begin{theorem}\label{thm.random_upper}
	Given Assumptions \ref{assu:smoothness}-\ref{assu:noise} and $\beta_\mu\le\beta_\tau\le 1$, fix any integer parameters $(m_1, m_2)$ with $m_1\ge \kappa m_2$ and $m_2 = \Omega(\log n)$, the following upper bound holds for the estimator $\hat{\tau}$ in Algorithm \ref{alg:random}: 
	\begin{align*}
	&\sup_{\mu_0\in \calH_d(\beta_\mu), \tau\in \calH_d(\beta_\tau)} \bE_{\mu_0,\tau} [ \|\hat{\tau} - \tau\|_{L_1(g_0)} ] \le \mathsf{polylog}(n)\cdot \left( \left(\frac{\kappa m_2}{nm_1}\right)^{\frac{\beta_\mu}{d}} + \left(\frac{m_1}{n}\right)^{\frac{\beta_\tau}{d}} + \frac{\sigma}{\sqrt{m_2}}  \right),
	\end{align*}
	where $\mathsf{polylog}(n)$ hides poly-logarithmic factors in $n$. In particular, if the parameters $(m_1,m_2)$ are chosen as above, the estimator $\hat{\tau}$ achieves the upper bound of Theorem \ref{thm.minimax_random}. 
\end{theorem}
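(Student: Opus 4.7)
The plan is a pointwise error decomposition of $\hat\tau(x_0)-\tau(x_0)$ followed by integration against $g_0$. Substituting \eqref{eq.treatment_model}--\eqref{eq.HTE} into $Y_{j(i)}^1-Y_i^0$ and averaging over $i\in I_2$ splits the error into three parts,
\[
\hat\tau(x_0)-\tau(x_0) \;=\; B_\tau(x_0) + B_\mu(x_0) + V(x_0),
\]
with $B_\tau(x_0)=m_2^{-1}\sum_{i\in I_2}[\tau(X_{j(i)}^1)-\tau(x_0)]$, $B_\mu(x_0)=m_2^{-1}\sum_{i\in I_2}[\mu_0(X_{j(i)}^1)-\mu_0(X_i^0)]$, and $V(x_0)=m_2^{-1}\sum_{i\in I_2}(\varepsilon_{j(i)}^1-\varepsilon_i^0)$. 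Because $I_2$ is a function of the covariates only, conditioning on the covariates turns $V(x_0)$ into a zero-mean average of at most $2m_2$ independent noises of variance $\sigma^2$, giving $\bE|V(x_0)|\le \sqrt{2}\,\sigma/\sqrt{m_2}$ uniformly in $x_0$.

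Next, introduce the $m_1$-nearest-neighbor radius $r_1(x_0)=\max_{i\in I_1}\|X_i^0-x_0\|_2$ and the largest selected matching distance $r_2(x_0)=\max_{i\in I_2} d_i$. The H\"{o}lder assumption with $\beta_\mu\le\beta_\tau\le 1$, the triangle inequality $\|X_{j(i)}^1-x_0\|_2\le r_1(x_0)+r_2(x_0)$, and the monotonicity of $t\mapsto t^{\beta_\mu}$ yield $|B_\tau(x_0)|\le L(r_1+r_2)^{\beta_\tau}$ and $|B_\mu(x_0)|\le L\,r_2^{\beta_\mu}$. The condition $m_1\ge\kappa m_2$ will guarantee $r_2\lesssim r_1$ with high probability (the $m_1$-NN ball contains $\gtrsim m_1/\kappa\ge m_2$ treatment points by the density ratio bound), so in particular $|B_\tau(x_0)|\lesssim r_1(x_0)^{\beta_\tau}$.

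The crux is then probabilistic control of $r_1$ and $r_2$. A binomial concentration argument applied to $|\{i:X_i^0\in B(x_0,r)\}|\sim\mathrm{Binomial}(n,\int_{B(x_0,r)}g_0)$ shows that with probability $1-n^{-\Omega(1)}$, $r_1(x_0)^d$ matches, up to polylog factors, the deterministic scale $r_1^\ast(x_0)$ defined by $n\,V_d(r_1^\ast)^d\,\bar g_0(x_0,r_1^\ast)=m_1$, where $\bar g_0(x_0,r)\triangleq(V_d r^d)^{-1}\int_{B(x_0,r)}g_0$. Conditional on $I_1$, the matching distances $d_i$ have tails of order $n g_1 V_d r^d$, and a second binomial-type concentration (sharp because $m_2=\Omega(\log n)$) gives $r_2(x_0)^d\lesssim \mathsf{polylog}(n)\cdot\kappa m_2/(n m_1\,\bar g_0(x_0,r_1))$ after invoking the density ratio. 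Integrating against $g_0$ then reduces the claim to residual estimates of the form
\[
\int_{[0,1]^d} g_0(x_0)\,\bar g_0(x_0,r_1(x_0))^{-p}\,dx_0 \;\lesssim\; \mathsf{polylog}(n), \qquad p\in\{\beta_\tau/d,\beta_\mu/d\},
\]
which immediately produce $\int r_1^{\beta_\tau}g_0\lesssim\mathsf{polylog}(n)(m_1/n)^{\beta_\tau/d}$ and $\int r_2^{\beta_\mu}g_0\lesssim\mathsf{polylog}(n)(\kappa m_2/(nm_1))^{\beta_\mu/d}$; combined with the noise bound this yields the theorem.

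The main obstacle is precisely the displayed residual estimate. No uniform lower bound on $g_0$ or $g_1$ is assumed and the averaging scale $r_1(x_0)$ is itself a data-dependent random quantity, so the classical Hardy--Littlewood maximal inequality does not apply off the shelf. This is exactly where the novel maximal inequality mentioned in the introduction enters, extending the $k$-NN maximal-function technique of \cite{jiao2018nearest,han2017optimal} to a two-sample setting involving both $g_0$ and $g_1$ at a random scale, with a union bound over the $n^{-\Omega(1)}$-failure events for the $r_1$- and $r_2$-concentration steps absorbed into the polylog factors.
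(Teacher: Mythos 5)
Your decomposition into $B_\tau + B_\mu + V$ and the bias analysis via nearest-neighbor radii and a density-dependent maximal inequality are broadly aligned with the paper's proof, and the constraint $m_1 \ge \kappa m_2$ is correctly invoked to reduce the $B_\tau$ bound to the $m_1$-NN radius. However, there is a genuine gap in the variance term. You assert that, conditional on the covariates, $V(x_0)$ is ``a zero-mean average of at most $2m_2$ independent noises of variance $\sigma^2$, giving $\bE|V(x_0)|\le\sqrt{2}\sigma/\sqrt{m_2}$.'' This is not correct as stated: the map $i\mapsto j(i)$ need not be injective, so the same treatment noise $\varepsilon_j^1$ can appear with multiplicity $S_j = \sum_{i\in I_2}\mathbf{1}\{j(i)=j\}$, and the conditional variance of $V(x_0)$ is $\sigma^2 m_2^{-2}\bigl(m_2 + \sum_j S_j^2\bigr)$, not $\sigma^2 m_2^{-2}\cdot 2m_2$. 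Since $\sum_j S_j^2$ can in principle be as large as $m_2^2$ (if all selected control points share the same treatment nearest neighbor), your stated bound does not follow from ``independence plus conditioning'' alone.

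The paper devotes a substantial part of the proof to precisely this issue: it shows $\bE\bigl[\sum_j S_j^2\bigr] = O(m_2)$ by (i) exchangeability of the treatment covariates, (ii) the deterministic fact that each point can be the $1$-nearest-neighbor of at most $c_d$ others (a constant-degree property of NN digraphs, cited from \cite{gao2017estimating}), and (iii) a change-of-measure argument under the bounded likelihood ratio together with the constraint $m_1\ge\kappa m_2$ to handle $g_0\neq g_1$. Your proposal would need this analysis, or an equivalent control of the repeated-matching multiplicities, to obtain the claimed $\sigma/\sqrt{m_2}$ rate; without it the variance term is not established. A secondary, more minor issue is that the ``residual estimates'' $\int g_0\,\bar g_0(\cdot,r_1)^{-p}\,dx_0\lesssim\mathsf{polylog}(n)$ are asserted rather than derived, and they are delicate when $p = \beta/d$ is near $1$ because the local scale $r_1(x_0)$ is itself random; the paper sidesteps this by bounding $\bE[D_1]$ and $\bE[D_2]$ directly via the exponential-moment form of the maximal inequality (Lemma \ref{lemma:minimal_inequality}) rather than via a pointwise density surrogate.
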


We explain the originations of the three types of errors in Theorem \ref{thm.random_upper}. The first error comes from the covariate matching bias, where it is given by the average of the $m_2$ smallest $1$-nearest-neighbor distances between control-treatment pairs at the second stage. The second error comes from the bias incurred at the first stage, where the key quantity of interest is the distance between a random query point $x_0 \sim g_0$ to its $m_1$-nearest-neighbor. Finally, the third error comes from the observational noise, where it is expected that the noise magnitude drops off by a factor of $\sqrt{m_2}$ after averaging. Hence, the optimal choice of the parameters $(m_1,m_2)$ aims to balance the above types of errors, with an additional constraint $\kappa m_2\le m_1$ to ensure that no single treatment covariate is matched to too many control covariates and therefore lead to a small variance. 

The full proof of Theorem \ref{thm.random_upper} is postponed to the appendix, but we remark that an appropriate maximal inequality is the key to handling general densities $g_0,g_1$ which may be close to zero. Specifically, let $f$ be any density function on $[0,1]^d$, and define the following \emph{minimal function} of $f$ as 
\begin{align}\label{eq:minimal_function}
m[f](x) \triangleq \inf_{0<r\le \sqrt{d}} \frac{1}{\text{Vol}_d( B(x;r)  )} \int_{B(x;r)} f(t)dt, 
\end{align}
where $B(x;r)$ denotes the ball centered at $x$ with radius $r$, $\text{Vol}_d$ is the $d$-dimensional volume, and $f(t)=0$ whenever $t\notin [0,1]^d$. It is clear that $m[f](x) \le f(x)$ for almost all $x\in [0,1]^d$, while the following lemma summarizes a key property of $m[f]$ which is very useful in the proof of Theorem \ref{thm.random_upper} and could be of independent interest. 
\begin{lemma}\label{lemma:minimal_inequality}
	For every $\lambda>0$, there exists a constant $C_d>0$ depending only on $d$ such that
	\begin{align*}
	\int_{[0,1]^d} f(x)e^{-\lambda \cdot m[f](x)}dx \le \begin{cases}
	\exp(-\lambda/eC_d) & \text{if } \lambda < eC_d, \\
	C_d / \lambda & \text{if } \lambda \ge eC_d. 
	\end{cases}
	\end{align*}
\end{lemma}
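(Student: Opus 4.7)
My plan is to convert $\int f(x)\, e^{-\lambda m[f](x)} dx$ into a weighted integral of the sublevel-set masses $|E_u|_f := \int_{E_u} f\,dx$ with $E_u := \{x \in [0,1]^d : m[f](x) < u\}$ via layer-cake, and then to control these masses by a weak-type inequality of the form $|E_u|_f \le C_d u$ obtained from a covering argument. Combining the two yields a universal bound that specializes to both regimes claimed in the lemma.

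\textbf{Step 1 (weak-type covering).} I would prove $|E_u|_f \le \min(1, C_d u)$ for every $u > 0$. By the definition of $m[f]$, each $x \in E_u$ admits some $r_x \in (0, \sqrt{d}]$ with $\int_{B(x; r_x)} f \le u \cdot \text{Vol}_d(B(x; r_x))$. Apply the Besicovitch covering theorem (valid in $\bR^d$ since the radii are uniformly bounded) to the family $\{B(x; r_x): x \in E_u\}$ to extract a countable subfamily $\{B_i\}$ that still covers $E_u$ with pointwise multiplicity at most a dimensional constant $N_d$. Using $\mathbf{1}_{E_u} \le \sum_i \mathbf{1}_{B_i}$ and integrating against $f$ gives $|E_u|_f \le \sum_i \int_{B_i} f \le u \sum_i \text{Vol}_d(B_i) \le N_d \cdot u \cdot \text{Vol}_d(\bigcup_i B_i)$. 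Since every $B_i$ is centered in $[0,1]^d$ with radius at most $\sqrt d$, the union lies in $[-\sqrt d, 1+\sqrt d]^d$, whose volume is $(1+2\sqrt d)^d$. Setting $C_d := N_d(1+2\sqrt d)^d$ and combining with the trivial bound $|E_u|_f \le 1$ yields the claim.

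\textbf{Steps 2--3 (layer cake and conclude).} Since $f$ is a probability density on $[0,1]^d$ and $m[f] \ge 0$, the layer-cake identity reads $\int f\, e^{-\lambda m[f]} dx = \int_0^\infty \lambda e^{-\lambda u} |E_u|_f\, du$. Plugging in Step 1 and computing the resulting elementary integral gives $\int f\, e^{-\lambda m[f]} dx \le (C_d/\lambda)\bigl(1 - e^{-\lambda/C_d}\bigr)$. For $\lambda \ge eC_d$ this is already at most $C_d/\lambda$. For $0 < \lambda < eC_d$, writing $t = \lambda/C_d \in (0, e]$, I would verify the one-variable inequality $(1 - e^{-t})/t \le e^{-t/e}$ on $(0, e]$ (the two sides agree at $t = 0$, and a derivative check on the logarithmic difference $\log t - \log(1 - e^{-t}) + t/e$ confirms it is non-decreasing on $(0, e]$), producing the exponential bound $e^{-\lambda/(eC_d)}$.

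\textbf{Main obstacle.} The delicate point is the weak-type inequality of Step 1, which has to be obtained without any lower bound on $f$. A naive Vitali packing argument breaks down: enlarging a selected ball by a factor of $5$ to cover $E_u$ destroys the averaged bound $\int_B f \le u\,\text{Vol}_d(B)$ on the enlarged ball. Besicovitch's theorem is the right tool precisely because it delivers a \emph{cover} (not a packing) with bounded multiplicity, so each selected ball retains its small-mass property and the multiplicity converts directly into a dimensional constant in the final bound. The subsequent layer-cake manipulation and the scalar calculus verification are essentially routine.
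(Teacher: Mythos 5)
Your overall strategy is sound and yields the lemma, but it follows a noticeably different path from the paper's, and there is one small but genuine error in the final calculus step. The paper first establishes the same weak-type bound $\int f(x)\,\mathbf{1}\{m[f](x)\le\varepsilon\}\,dx \le C_d\varepsilon$ by invoking a generalized Hardy--Littlewood maximal inequality (its Lemma~\ref{lemma.maximal}, a variant of a result of Jiao--Gao--Han) applied with $\mu_1 = f\,d\lambda$, $\mu_2 = d\lambda$ on the enlarged cube $[-\sqrt d,1+\sqrt d]^d$; you instead reprove this directly via the Besicovitch covering theorem, which is essentially the same argument made self-contained, and your remark that a na\"{i}ve Vitali packing would fail (because the $5\times$ dilation destroys the average bound) is exactly the right observation. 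The bigger divergence is in the small-$\lambda$ regime: the paper converts the weak-type bound into $\int f\,e^{-t\lambda m[f]}\le C_d/(t\lambda)$ for all $t\ge1$ via the exponential-random-variable identity (which is, as you use, the layer-cake formula), and then self-improves with H\"{o}lder's inequality by optimizing $\bigl(C_d/(t\lambda)\bigr)^{1/t}$ over $t\ge1$; the minimizer $t=eC_d/\lambda$ produces $e^{-\lambda/(eC_d)}$. You instead sharpen the layer-cake bound to $|E_u|_f\le\min(1,C_du)$ and compute the integral explicitly, arriving at $(C_d/\lambda)(1-e^{-\lambda/C_d})$, then reduce to the scalar inequality $(1-e^{-t})/t\le e^{-t/e}$ on $(0,e]$. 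This is a valid and arguably more elementary route.

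The one flaw is in your verification of that scalar inequality. Taking logarithms, the statement $(1-e^{-t})/t\le e^{-t/e}$ is equivalent to $h(t):=\log t - \log(1-e^{-t}) - t/e \ge 0$, not to the positivity of $\log t - \log(1-e^{-t}) + t/e$ as you wrote (the latter is monotone increasing and nonnegative, but proving that only gives the trivially weaker $(1-e^{-t})/t\le e^{t/e}$). Moreover $h$ is \emph{not} non-decreasing on $(0,e]$: its derivative $h'(t)=\tfrac{1}{t}-\tfrac{1}{e^t-1}-\tfrac{1}{e}$ starts positive (value $\tfrac12-\tfrac1e$ at $0^+$) and crosses zero before $t=e$. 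The inequality nevertheless holds, and the repair is short: $\phi(t)=\tfrac{1}{t}-\tfrac{1}{e^t-1}$ is strictly decreasing (equivalent to $t<2\sinh(t/2)$), so $h'$ is decreasing and changes sign at most once; hence $h$ is unimodal with $h(0^+)=0$ and $h(e)=-\log(1-e^{-e})>0$, which forces $h\ge 0$ on $(0,e]$. With this patch, your proof is complete.
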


\subsection{Minimax Lower Bound}
The following minimax lower bound complements Theorem \ref{thm.random_upper} and shows the near minimax optimality (up to logarithmic factors) of the two-stage estimator in Algorithm \ref{alg:random}. 

\begin{theorem}\label{thm.random_lower}
	Assume that $\beta_\mu\le \beta_\tau \le 1$. Then under Gaussian noises and appropriate densities $g_0, g_1$ satisfying Assumption \ref{assu:covariate}, the minimax risk under the random design satisfies
	\begin{align*}
	R_{n,d,\beta_\mu,\beta_\tau,\sigma}^{\text{\rm random}}(\kappa) \ge \frac{1}{\mathsf{polylog}(n)}\left( \left(\frac{\kappa}{n^2}\right)^{\frac{1}{d(\beta_\mu^{-1} + \beta_\tau^{-1})}} + \left(\frac{\kappa\sigma^2}{n^2}\right)^{\frac{1}{2+d(\beta_\mu^{-1} + \beta_\tau^{-1})}} + \left(\frac{\kappa\sigma^2}{n}\right)^{\frac{\beta_\tau}{2\beta_\tau + d}} \right). 
	\end{align*}
\end{theorem}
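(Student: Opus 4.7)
The right-hand side is a sum of three terms, and I will derive each as an independent lower bound — taking the maximum gives the claim up to absolute constants. The strategy for each mirrors the fixed-design lower bound of Theorem \ref{thm.fixed_lower} but incorporates a second adversarial choice, the densities $g_0, g_1$ subject to $\kappa^{-1}\le g_0/g_1\le\kappa$, which is what encodes the $\kappa$-dependence in the final rate. All three reductions use Le Cam's two-point method or Fano's inequality on coordinated Hölder bump constructions.

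For the first term $(\kappa/n^2)^{1/(d(\beta_\mu^{-1}+\beta_\tau^{-1}))}$, I take $\sigma=0$ and extend the functional-optimization argument of Theorem \ref{thm.fixed_lower}. Picking $g_0, g_1$ so that some fixed region has control--treatment pair density $\Theta(n^2/\kappa)$ and minimum matching distance $\Theta((\kappa/n^2)^{1/d})$ — achievable via a standard two-level density construction respecting the $\kappa$-ratio — I plant disjoint bump pairs: on each selected pair at distance $d_i$ I set $\tau$ to be a Hölder bump of width $r_\tau$ and height $r_\tau^{\beta_\tau}$, and $\mu_0$ to be a compensating bump of width $r_\mu$ and height $r_\mu^{\beta_\mu}\gtrsim r_\tau^{\beta_\tau}$ that vanishes at the control covariate while cancelling $\tau$ at the matched treatment covariate (feasible because $\beta_\mu\le\beta_\tau$). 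Maximizing the aggregate $\|\tau\|_{L_1(g_0)}$ over the number of bumps and over $(r_\mu, r_\tau)$, subject to the matching constraint $r_\mu\gtrsim d_i\gtrsim (\kappa/n^2)^{1/d}$ and the geometric packing constraint, produces the first term.

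For the third term $(\kappa\sigma^2/n)^{\beta_\tau/(2\beta_\tau+d)}$, I reduce to the noise-limited regime. Fix $\mu_0\equiv 0$ so the control data is uninformative about $\tau$, and choose $g_1\asymp 1/\kappa$ on the region where $g_0$ puts most of its mass (spiking on a small complementary set to integrate to one). Estimating $\tau\in\calH_d(\beta_\tau)$ then reduces to a classical nonparametric regression problem with $n$ observations whose effective sampling density on the target region is $1/\kappa$. Applying Fano/Assouad with orthogonal bumps of width $h\asymp (\kappa\sigma^2/n)^{1/(2\beta_\tau+d)}$ and heights $h^{\beta_\tau}$, and translating the resulting local error back into $L_1(g_0)$, yields exactly the claimed rate.

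The middle term $(\kappa\sigma^2/n^2)^{1/(2+d(\beta_\mu^{-1}+\beta_\tau^{-1}))}$ is the main obstacle, because neither the matching bias nor the observational noise dominates, and both must be tracked simultaneously. I plan to use Fano's method with $M\asymp r^{-d}$ coordinated hypotheses indexed by $\{0,1\}^M$: partition the good region into cells of side $r$, and in each cell independently plant or omit a bump pair $(\mu_0, \tau)$ of widths $(r_\mu, r_\tau)$ and heights $(r_\mu^{\beta_\mu}, r_\tau^{\beta_\tau})$ around the nearest control--treatment pair of that cell, configured so that $\mu_0+\tau$ vanishes at that single treatment covariate. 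The separation in $L_1(g_0)$ scales as $r_\tau^{d+\beta_\tau}$ per differing cell, while the cell-wise KL divergence is bounded by $\sigma^{-2}\cdot n r^d\cdot r_\mu^{2\beta_\mu}$, coming from the uncancelled $\mu_0$-signal at the $\Theta(nr^d)$ treatment covariates in the cell that are \emph{not} the planted match. Balancing these via Fano, together with the Hölder cancellation $r_\mu^{\beta_\mu}\asymp r_\tau^{\beta_\tau}$ and the geometric constraint $r_\mu\gtrsim r_\tau\gtrsim (\kappa/n^2)^{1/d}$, optimally yields the intermediate rate. The delicate step — and the one I expect to take the most work — is controlling the cell-local matching geometry uniformly under the random design, for which I anticipate needing a maximal inequality in the spirit of Lemma \ref{lemma:minimal_inequality} applied to the empirical nearest-neighbor spacing distribution, so that the packing construction succeeds on an event of probability tending to one.
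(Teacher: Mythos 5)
Your high-level plan is aligned with the paper's: a reduction to uniform densities with $m=n/\kappa$ effective treatment observations, Le Cam's two-point method at $\sigma=0$ for the first term, Fano's inequality with exponentially many hypotheses for the middle term, and the classical nonparametric lower bound for the third term. The third-term reduction is essentially correct. However, there are two genuine gaps in the bump constructions for the first and middle terms, and both are bridged in the paper by the same key tool that you do not invoke: Lemma \ref{lemma.holder}, which lets one prescribe function values at \emph{all} observed covariates and then extend to a H\"older function.

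For the first term, your ``disjoint bump pairs'' cannot simultaneously respect the required rate and the isolation of pairs. To achieve $\|\tau\|_1 \asymp h_1^{\beta_\tau}$ with $h_1 = (mn)^{-\beta_\mu/(d(\beta_\tau+\beta_\mu))}$, the packing constraint forces bump width $r_\tau \asymp h_1$, but $h_1 \gg n^{-1/d}$, so each bump of width $r_\tau$ contains $\Theta(n r_\tau^d) \gg 1$ control and treatment covariates. The constraints $\mu_0(X_j^0)=0$ and $\mu_0(X_j^1)+\tau(X_j^1)=0$ must hold at \emph{every} such covariate, not just the one ``selected pair,'' and a single compensating $\mu_0$-bump cannot do this. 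The paper instead sets $\tau(X_i^1) = -c\,\min_j\bigl(\|X_i^1 - X_j^1\|_2^{\beta_\tau} + \min_k\|X_j^1-X_k^0\|_2^{\beta_\mu}\bigr)$ and $\mu_0(X_i^1)=-\tau(X_i^1)$, $\mu_0(X_j^0)=0$, verifies the pairwise condition of Lemma \ref{lemma.holder}, and then shows (via Lemma \ref{lemma:min_dist}) that $|\tau(X_i^1)| = \widetilde\Omega(h_1^{\beta_\tau})$ for a constant fraction of treatment covariates. This global formula automatically respects the random geometry in a way no local bump-planting can.

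For the middle term, your construction cancels $\mu_0^v+\tau^v$ at only \emph{one} treatment covariate per cell, so the signal leaks at essentially all $\Theta(n r_\tau^d)$ treatment covariates inside each $\tau$-bump, i.e.\ $\Theta(n)$ of them in total. Balancing total KL $\asymp n\,r_\tau^{2\beta_\tau}/\sigma^2$ against $M=r_\tau^{-d}$ then yields $r_\tau^{\beta_\tau} \asymp (\sigma^2/n)^{\beta_\tau/(2\beta_\tau+d)}$, which is the \emph{third} term, not the second. The paper's construction is qualitatively different: $\mu_0^v$ is defined so as to cancel $\tau^v$ at \emph{every} treatment covariate whose matching quantity $\tilde\mu_0(X_i^1)$ exceeds the bump height $ch_2^{\beta_\tau}$, using Lemma \ref{lemma.holder} to realize this as a H\"older function. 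Only the $\widetilde O(h_1^{-d})$ poorly-matched treatment covariates leak, and $h_1^{-d} < m$ throughout the intermediate regime. The resulting KL balance $h_2^{2\beta_\tau+d} \lesssim h_1^d\sigma^2$, combined with the cancellation relation $h_2^{\beta_\tau} \asymp (h_1(mn)^{1/d})^{-\beta_\mu}$, is precisely what produces the exponent $1/(2+d(\beta_\mu^{-1}+\beta_\tau^{-1}))$.

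Two smaller points. First, the maximal inequality (Lemma \ref{lemma:minimal_inequality}) is used in the \emph{upper} bound proof; the lower bound relies on the Chernoff-type concentration of Lemma \ref{lemma.binomial} via Lemma \ref{lemma:min_dist}, so the anticipated ``maximal-inequality-for-NN-spacing'' step does not appear. Second, the claimed constraint $r_\mu\gtrsim r_\tau$ is reversed: with $r_\mu^{\beta_\mu}\asymp r_\tau^{\beta_\tau}$ and $\beta_\mu\le\beta_\tau\le 1$, the $\mu_0$-bump is \emph{narrower}, $r_\mu = r_\tau^{\beta_\tau/\beta_\mu}\le r_\tau$.
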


The proof of Theorem \ref{thm.random_lower} is very involved and postponed to the appendix. Similar to the fixed design, the last error term comes from the classical nonparametric estimation of $\tau$, and establishing the first error term requires to solve a linear program in the same spirit to \eqref{eq.optimization}. However, as a regular grid is no longer available, constructing a near-optimal solution to \eqref{eq.optimization} becomes much more challenging. In addition, proving the second error term requires a hybrid approach of testing multiple hypotheses and constructing an exponential number of feasible solutions to \eqref{eq.optimization}. To this end, the following novel representation of the H\"{o}lder continuity condition is very useful throughout the lower bound argument. 
\begin{lemma}\label{lemma.holder}
	Let $\beta\le 1$, and $\{(x_i, y_i)\}_{i=1}^n$ be fixed covariate-value pairs on $[0,1]^d\times \mathbb{R}$. Then there exists some function $f\in \calH_d(\beta)$ (with radius $L$) with $f(x_i) = y_i$ for all $i\in [n]$ if and only if for any $i,j\in [n]$, 
	\begin{align*}
	|y_i - y_j| \le L\|x_i - x_j\|_2^\beta. 
	\end{align*}
\end{lemma}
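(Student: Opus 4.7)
The plan is to establish both directions of the equivalence; the necessity (``only if'') direction is immediate, and the substantive content lies in an explicit interpolating construction for sufficiency.

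For necessity, if $f\in\calH_d(\beta)$ with $f(x_i)=y_i$, then $\beta\le 1$ means $s=0$ in the H\"{o}lder ball definition and the condition reduces to $|f(x)-f(y)|\le L\|x-y\|_2^\beta$; evaluating at $(x,y)=(x_i,x_j)$ gives the claim.

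For sufficiency, the key observation is that because $\beta\le 1$, the map $t\mapsto t^\beta$ is subadditive on $\RR_{\ge 0}$, so $(x,y)\mapsto\|x-y\|_2^\beta$ satisfies a triangle inequality on $\RR^d$. This puts us exactly in the setting of the McShane--Whitney extension, and I would define
$$f(x)\;\triangleq\;\min_{i\in[n]}\bigl(y_i+L\|x-x_i\|_2^\beta\bigr)$$
and verify (i) $f(x_j)=y_j$ for every $j$, and (ii) $f$ is H\"{o}lder with exponent $\beta$ and constant $L$ on $\RR^d$. For (i), taking $i=j$ gives $f(x_j)\le y_j$, while the hypothesis $y_j-y_i\le L\|x_j-x_i\|_2^\beta$ rearranges to $y_i+L\|x_j-x_i\|_2^\beta\ge y_j$ for every $i$, so $f(x_j)\ge y_j$. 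For (ii), for any $x,y\in\RR^d$ and any $i$, subadditivity of $t\mapsto t^\beta$ combined with the Euclidean triangle inequality gives
$$f(x)\;\le\;y_i+L\|x-x_i\|_2^\beta\;\le\;y_i+L\|y-x_i\|_2^\beta+L\|x-y\|_2^\beta;$$
taking the minimum over $i$ and swapping $x\leftrightarrow y$ yields $|f(x)-f(y)|\le L\|x-y\|_2^\beta$.

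The only technical bookkeeping concerns the support condition in the definition of $\calH_d(\beta)$, which requires $f\equiv 0$ outside $[0,1]^d$, whereas the McShane interpolant is nonzero globally. I would handle this by multiplying the interpolant by a fixed smooth cutoff equal to $1$ on $[0,1]^d$ and supported slightly beyond it; the product remains H\"{o}lder of order $\beta$ with constant a fixed multiple of $L$, which is absorbed into the paper's convention that $L$ is a fixed but unspecified positive constant. I expect this support-compatibility step, rather than the McShane argument itself, to be the only delicate point, and it is routine.
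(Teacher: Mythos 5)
Your proof is correct, and while it rests on the same core inequality as the paper's, it packages the construction differently. The paper's sufficiency argument is iterative: it shows that for any single new point $x$ a compatible value can be chosen --- using exactly the inequality $\min_i\{y_i+\|x_i-x\|_2^\beta\}\ge\max_j\{y_j-\|x_j-x\|_2^\beta\}$, established via the same triangle-inequality-plus-subadditivity computation you use to verify $f(x_j)=y_j$ --- then enumerates a countable dense subset of $\RR^d$, extends one point at a time, and finally passes to the limit. You instead write down the McShane--Whitney extension $f(x)=\min_i\bigl(y_i+L\|x-x_i\|_2^\beta\bigr)$ in closed form and verify the Hölder bound directly. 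Your route is cleaner: it avoids the enumeration and the limiting step (which in the paper is left a bit terse, with uniform continuity of $f$ on the dense subset implicitly supplied by the maintained invariant), at the cost of invoking a named classical construction rather than deriving it from scratch. On the support condition you flag: the paper's own proof also builds $f$ on all of $\RR^d$ and never enforces compact support, leaving the cutoff to the downstream applications (as noted in the proof of Theorem~\ref{thm.random_lower}, which explicitly multiplies by a smooth function supported on $[0,1]^d$). So your cutoff remark is a reasonable patch, but you are not missing anything the paper itself addresses at the level of this lemma.
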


\section{Numerical Experiments}\label{sec:experiment}
We illustrate the efficacy of the proposed estimator in Algorithm \ref{alg:random} via some numerical experiments. Specifically, we aim to show that the two main ingredients of Algorithm \ref{alg:random}, i.e. constructing pseudo-observations based on covariate matching and discarding observations with poor matching quality, are key to improved HTE estimation. We compare our estimator (which we call \emph{selected matching}) with the following three estimators: the \emph{full matching} estimator which never discards samples (i.e. $m_2=m_1$ always holds in Algorithm \ref{alg:random}), the \emph{kNN differencing} and \emph{kernel differencing} estimators which apply separate $k$-NN or kernel estimates to both baselines and then take the difference. Note that the idea of differencing-based estimators were essentially used in \cite{alaa2018limits,Kunzel4156}. 

The experimental setting is as follows. We choose the parameter values $n\in \{50,100,\cdots,1,000\}$, $d,\kappa\in \{1,\cdots,10\}$, and $\sigma = 2/\sqrt{n}$. Denote by $\phi_{\mu,\sigma^2}(x)$ the pdf of the normal random variable $\calN(\mu,\sigma^2)$, the baseline function and the HTE are chosen to be
\begin{align*}
\mu_0(x) = 2\phi_{0.1,0.15^2}(x) + 2.5\phi_{0.4,0.05^2}(x) + 4\phi_{0.8,0.1^2}(x), \quad \tau(x) = \phi_{0,1}(2x-1)
\end{align*}
for $d=1$ and $x\in [0,1]$, and for general dimensions, the variable $x$ is replaced by $\sqrt{d}(\bar{x}-0.5)+0.5$. An illustration of $\mu_0$ and $\tau$ for $d=1$ is plotted in Figure \ref{fig.HTE}, where $\tau$ is smoother and $\mu_0$ is more volatile. For each given $(n,d,\kappa,\sigma)$, we generate $n$ control covariates $X_1^0, \cdots, X_n^0$ following the i.i.d. density $g_0(x) = 2\kappa/(\kappa+1)$ when $x_1 \in [0,1/2]$ and $g_0(x) = 2/(\kappa+1)$ when $x_1\in (1/2,1]$. Similarly, the treatment covariates $X_1^1,\cdots,X_n^1$ are i.i.d. generated following the density $g_1(x) = 2-g_0(x)$, and the responses $Y_i^0, Y_i^1$ are defined in \eqref{eq.treatment_model} with i.i.d. $\calN(0,\sigma^2)$ noises. The algorithm parameters are determined by the optimal bias-variance tradeoffs in theory. Finally, the performance of HTE estimation is measured via the root mean squared error (RMSE) averaged over $100$ simulations. The source codes are available at \url{https://github.com/Mathegineer/NonparametricHTE}.

The experimental results are displayed in Figures \ref{fig.HTE} and \ref{fig.RMSE}, and we have the following observations. First, the differencing based estimators typically perform worse than the matching based estimators, especially at the places where the baseline is more volatile. Therefore, the differencing suffers more from the less smooth baselines. Second, although the \emph{full matching} estimator behaves similarly as \emph{selected matching} when $d=\kappa=1$, the performance of \emph{full matching} deteriorates significantly if either $d$ or $\kappa$ increases, and \emph{selected matching} is relatively more stable. This observation supports our intuition that the pairs with poor covariate matching quality should be discarded. 

\begin{figure}[htbp]
	\centering
	\begin{subfigure}[b]{0.32\textwidth}
		\includegraphics[width=\linewidth]{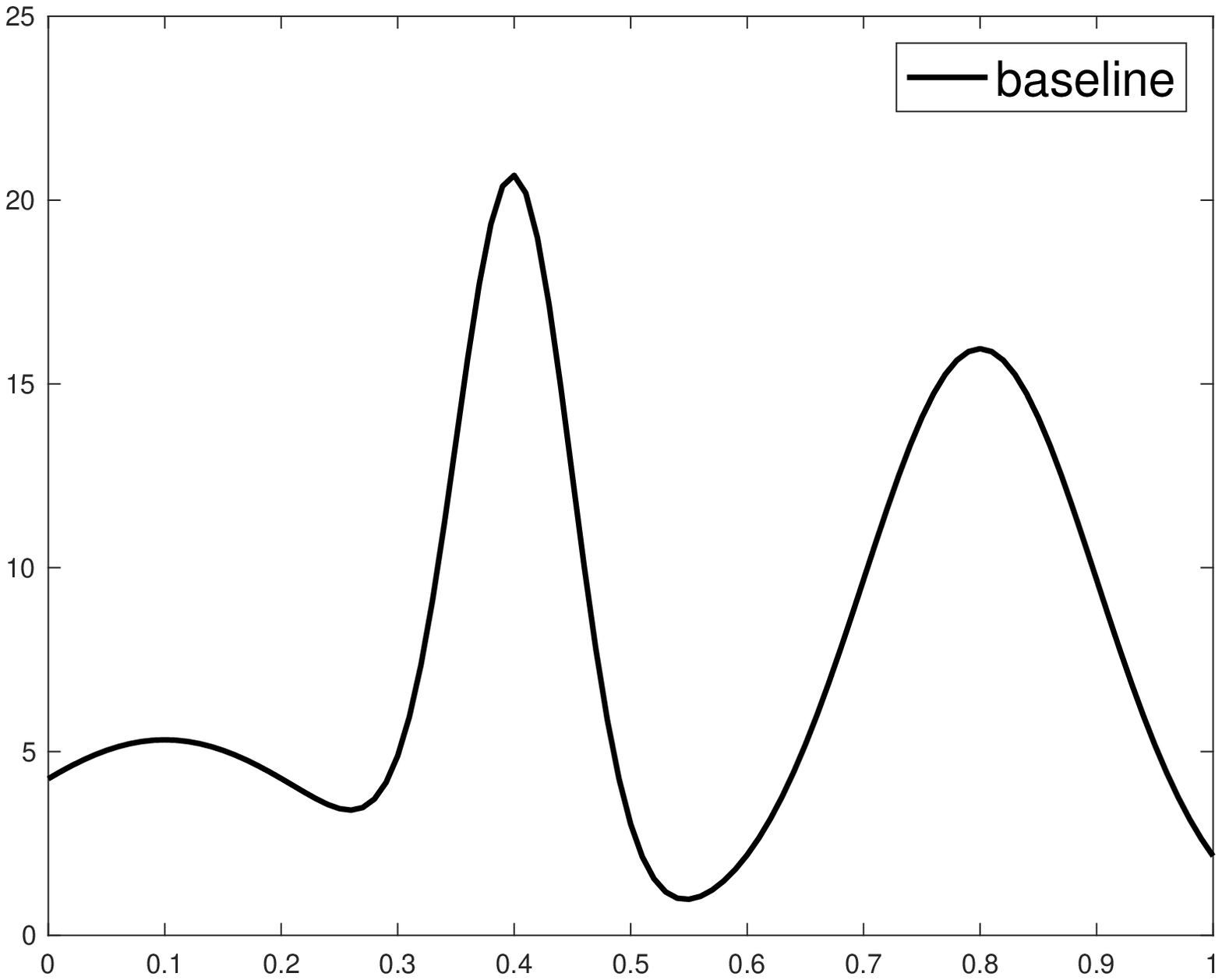}\caption{Baseline function $\mu_0$.}
	\end{subfigure}
	\begin{subfigure}[b]{0.32\textwidth}
		\includegraphics[width=\linewidth]{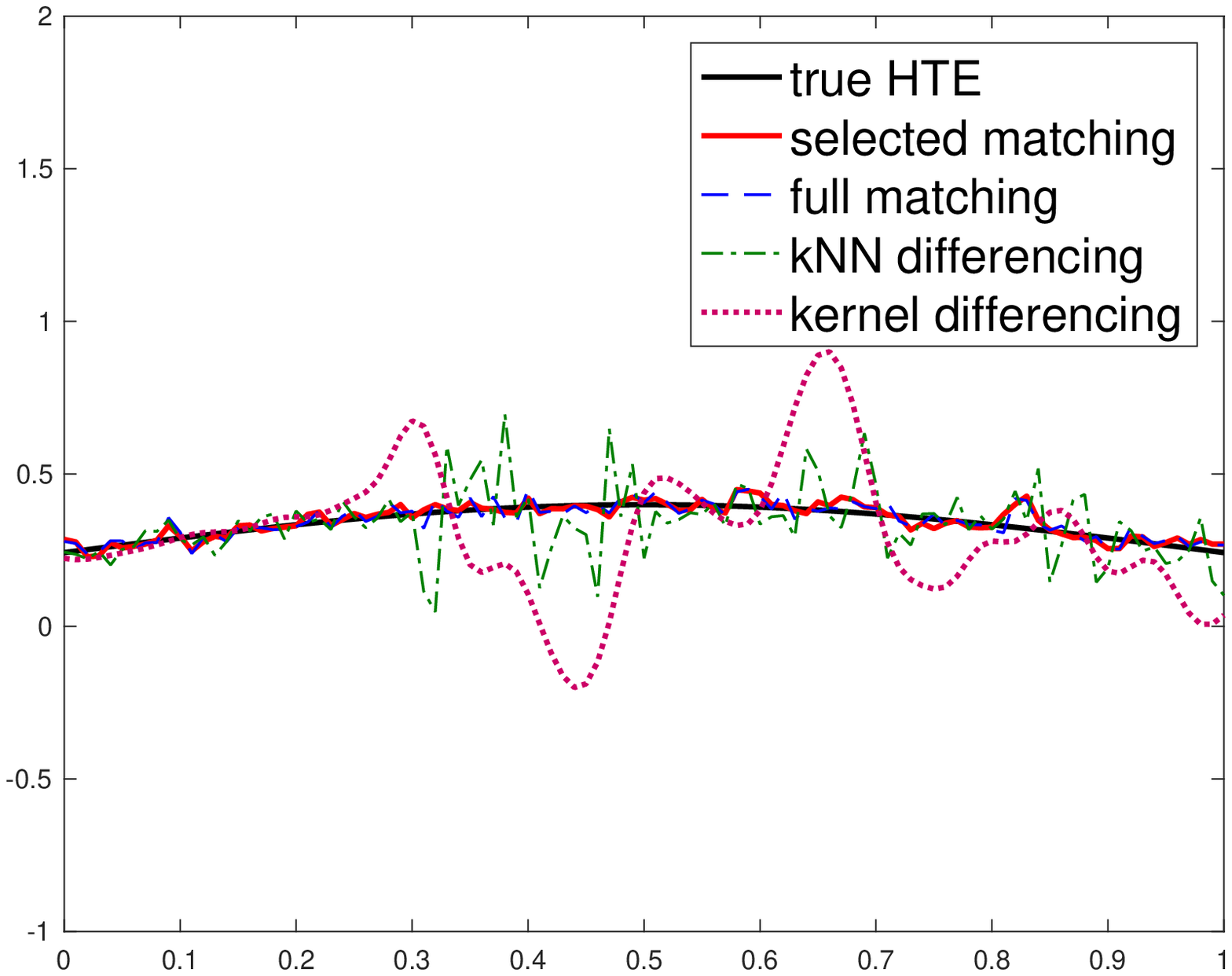}\caption{HTE $\tau$ with $\kappa=1$.}
	\end{subfigure}
	\begin{subfigure}[b]{0.32\textwidth}
		\includegraphics[width=\linewidth]{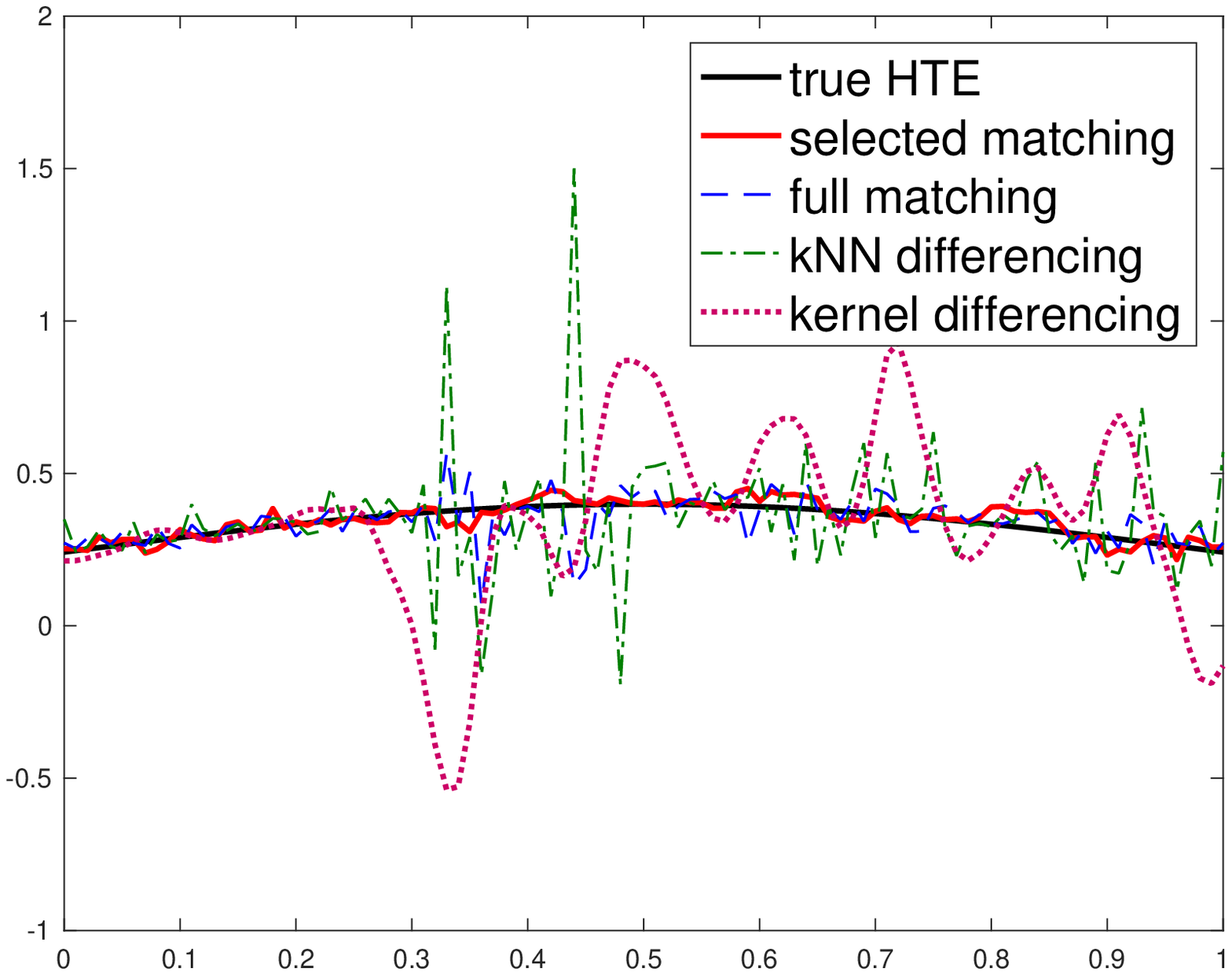}\caption{HTE $\tau$ with $\kappa=4$.}
	\end{subfigure}
	\caption{The baseline function $\mu_0$, HTE $\tau$, as well as the estimates.}\label{fig.HTE}
\end{figure}

\begin{figure}[htbp]
	\centering
	\begin{subfigure}[b]{0.32\textwidth}
		\includegraphics[width=\linewidth]{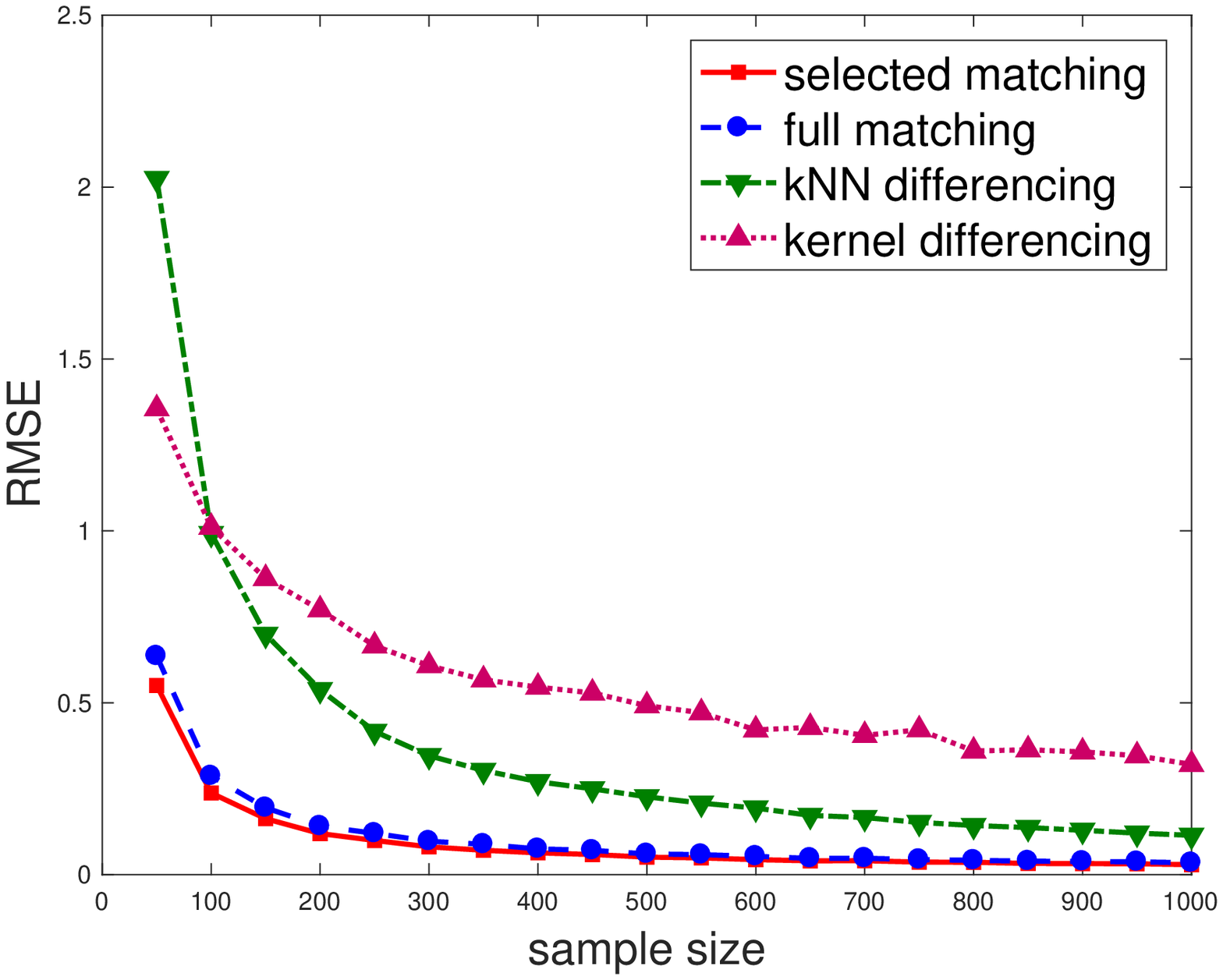}\caption{RMSE vs $n$.}
	\end{subfigure}
	\begin{subfigure}[b]{0.32\textwidth}
		\includegraphics[width=\linewidth]{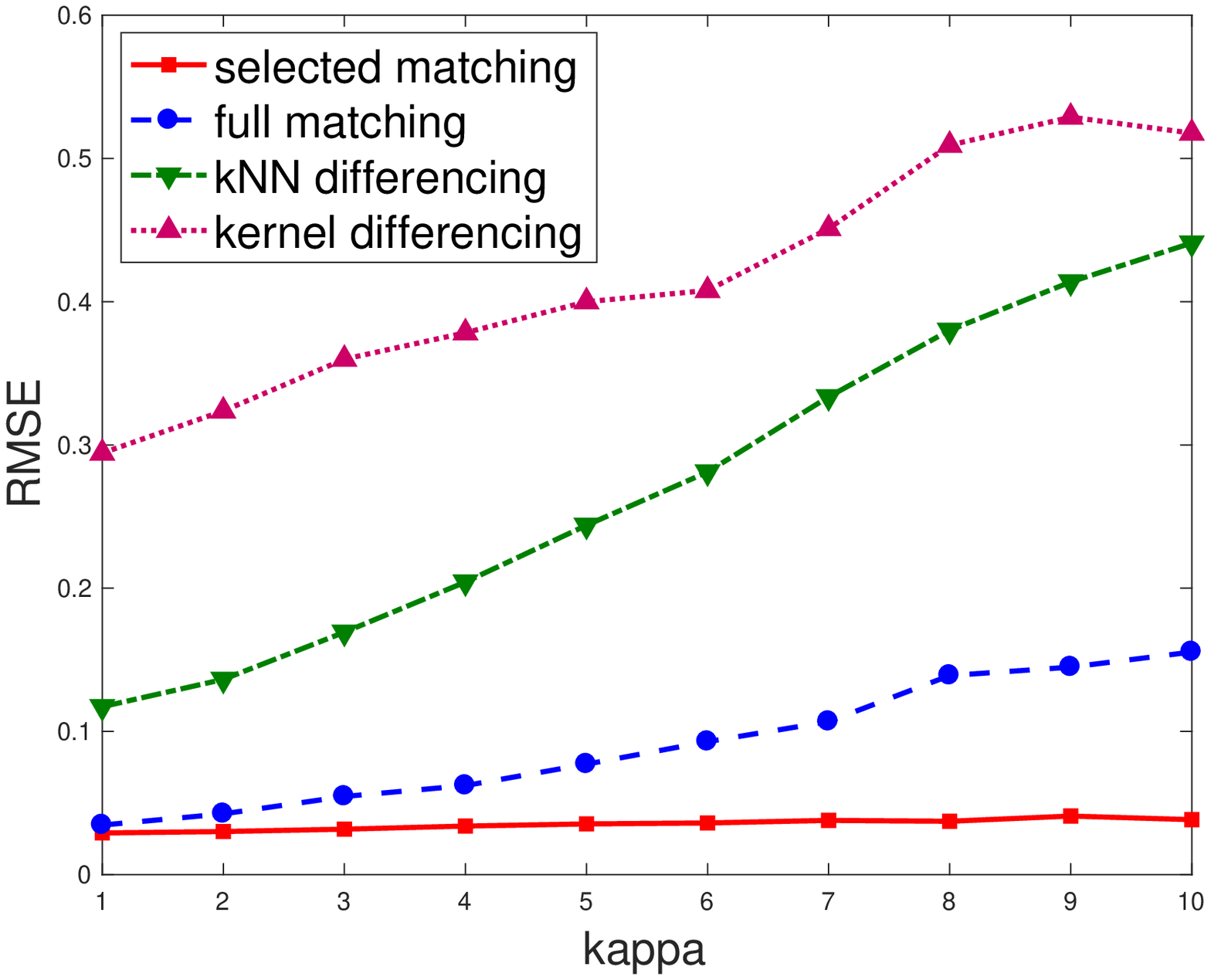}\caption{RMSE vs $\kappa$.}
	\end{subfigure}
	\begin{subfigure}[b]{0.32\textwidth}
		\includegraphics[width=\linewidth]{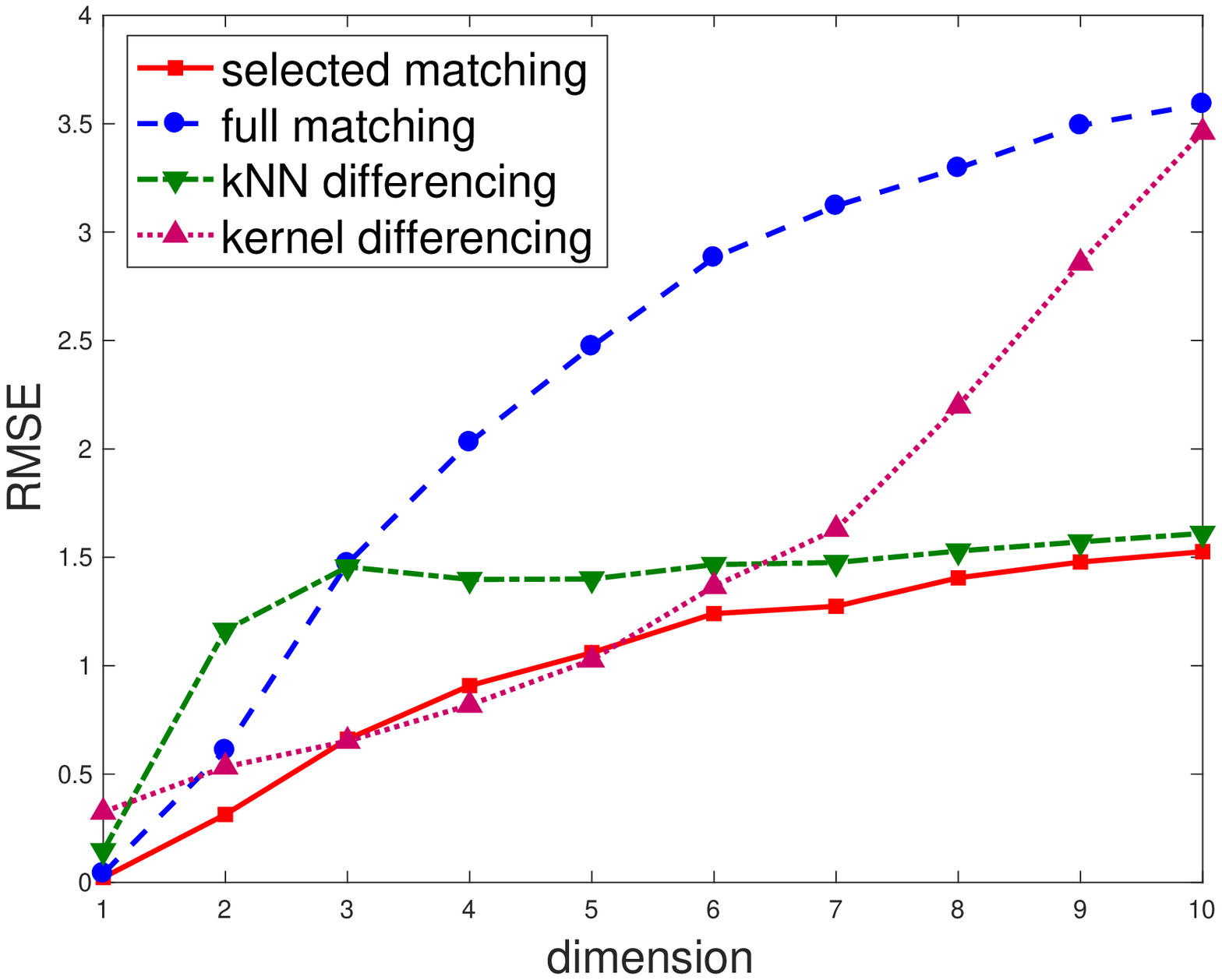}\caption{RMSE vs $d$.}
	\end{subfigure}
	\caption{The RMSE as a function of $n$, $\kappa$ or $d$ with default parameters $(n,\kappa,d)=(1000,1,1)$.}\label{fig.RMSE}
\end{figure}

\section{Further Discussions}\label{sec:discussion}
A major concern of this work is to capture higher order smoothness under the random design, which we remark is very challenging from both sides of achevability and lower bounds. For example, there are a number of open questions related to capturing higher order smoothness in nonparametric statistics, e.g. the adaptation to small densities \cite{patschkowski2016adaptation} despite the known asymptotic equivalence \cite{nussbaum1996asymptotic}, in the estimation of conditional variance \cite{richardson2014causal}, non-smooth functionals \cite{han2017optimal}, the analysis of nearest-neighbor methods \cite{jiao2018nearest}, local goodness-of-fit tests \cite{balakrishnan2019hypothesis}. Specializing to our setting, to capture the smoothness $\beta_{\mu}>1$ and reduce the matching bias, locally there need to be at least two treatment-control pairs mutually of small distance. However, instead of the $\Theta(n^{-2})$ minimum distance for $n$ uniform random variables in $[0,1]$, the counterpart for two pairs becomes at least $\Theta(n^{-4/3})$. Hence, we expect that there will be a phase transition from $\beta_\mu\le 1$ to $\beta_\mu>1$ in the estimation performance, and it is an outstanding open question to characterize this transition. 

As for the lower bound, the main difficulty of the extension to $\beta\ge 1$ lies in Lemma \ref{lemma.holder}. To the best of our knowledge, there is no simple and efficient criterion for the existence of a H\"{o}lder $\beta$-smooth function with specified values on general prescribed points. To illustrate the difficulty, assume that $\beta$ is an integer and consider the following natural generalization of the condition in Lemma \ref{lemma.holder}: for any $i_0,i_1,\cdots,i_{\beta}\in [n]$, it holds that $\beta! \cdot | f[x_{i_0}, x_{i_1} , \cdots, x_{i_\beta}]| \le L$, where $f[x_0,x_1,\cdots,x_m]$ is the divided difference of $f$ on points $x_0,x_1,\cdots,x_m$ \cite{de2005divided}: 
\begin{align*}
f[x_0,x_1,\cdots,x_m] \triangleq \sum_{i=0}^m f(x_i) \prod_{j\neq i} \frac{1}{x_i - x_j}. 
\end{align*}
Standard approximation theory shows that this condition is necessary, and it reduces to Lemma \ref{lemma.holder} for $\beta=1$. However, it is not sufficient when $\beta\ge 2$. For a counterexample, the value specification $f(-3) = f(-1) = 1, f(1) = f(3) = 0$ satisfies the divided difference condition with $\beta=2, L=1/4$, but no choice of $f(0)$ satisfies both $|f[-3,-1,0]| \le 1/8$ and $|f[0,1,3]|\le 1/8$. 

We also point out some other extensions. First, the current estimators rely on the knowledge of many parameters which are unlikely to be known in practice. For the smoothness parameters and the density ratio parameter, their impact factors through the bias and it is possible to apply Lepski's trick \cite{lepski1997optimal} or data-driven parameters \cite{fan1995data} to achieve full or partial adaptation. For other parameters such as the noise level, pre-estimation procedures such as \cite{donoho1995wavelet} are expected to be helpful. Second, in our work, we study the minimax risk after conditioning on both random realizations of the covariates and group assignments, meaning that the worst-case HTE can depend on both the above realizations. In contrast, \cite{nie2017quasi} only conditions on the random covariates and considers an expected risk taken with respect to the randomness in the group assignments, where the worst-case HTE cannot depend on the group assignments. The minimax risk in the latter setting remains unknown, and it is an outstanding question to compare the minimax rates in these settings. 



\appendix

\section{Auxiliary Lemmas}
\begin{lemma}\label{lemma.weights}
	Fix any $t\in \NN$, $m\ge 1$ and $0\le \Delta\le 1/(2m)$. For any $i\in [m]$, let $x_0, \cdots, x_t$ be the $(t+1)$ nearest neighbors of $(i-1)/m$ in $\{\Delta, 1/m+\Delta, \cdots, 1-1/m+\Delta\}$ with increasing distance, and the weights $w_0,\cdots,w_t$ be the solution to
	\begin{align*}
	\sum_{j=0}^t w_j = 1, \qquad \sum_{j=0}^t w_j(x-x_j)^\ell = 0, \qquad \ell = 1,\cdots,t.
	\end{align*}
	Then there exists a constant $C>0$ depending only on $t$ such that
	\begin{align*}
	|w_0|\le C, \qquad |w_j| \le C\cdot m\Delta, \qquad j=1,2,\cdots,t. 
	\end{align*}
\end{lemma}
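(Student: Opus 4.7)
The plan is to exploit the explicit Lagrange representation of the weights. Combining the two stated constraints gives $\sum_{j=0}^t w_j (x - x_j)^\ell = \mathbbm{1}(\ell = 0)$ for all $\ell = 0, 1, \ldots, t$ with $x := (i-1)/m$; equivalently, $\sum_j w_j q(x_j) = q(x)$ for every polynomial $q$ of degree at most $t$. Since the $(t+1)$ nodes are distinct, this linear system is non-singular and the unique solution is the Lagrange basis value
\[
w_j \;=\; \prod_{k \neq j} \frac{x - x_k}{x_j - x_k}, \qquad j = 0, 1, \ldots, t.
\]

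Next I would pin down the geometry of the $(t+1)$ nearest neighbors. Because $0 \le \Delta \le 1/(2m)$, the grid point $(i-1)/m + \Delta$ lies at distance exactly $\Delta$ from $x$, while every other grid point is at distance at least $1/m - \Delta \ge 1/(2m)$; hence $x_0 = (i-1)/m + \Delta$. The common $\Delta$ offset cancels from every difference $x_j - x_k$, so these differences are integer multiples of $1/m$, and the chosen indices $\{k_0, k_1, \ldots, k_t\}$ form a contiguous (one-sided, when $i$ is near the boundary) block of length $t+1$ around $i$. Consequently $|x - x_0| = \Delta$ and $|x - x_k| \in [1/(2m),\, (t+1)/m]$ for $k \ge 1$, while $|x_j - x_k| = |k_j - k_k|/m \in [1/m,\, t/m]$ for all $j \ne k$. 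Every inter-node distance is therefore of exact order $1/m$ with constants depending only on $t$; the sole small factor appearing in the Lagrange formula is $|x - x_0| = \Delta$.

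The claimed bounds now follow by term-by-term counting. For $w_0$, both $\prod_{k \ne 0}(x - x_k)$ and $\prod_{k \ne 0}(x_0 - x_k)$ are products of $t$ factors of order $1/m$, so $|w_0|$ is bounded by a constant depending only on $t$. For $w_j$ with $j \ge 1$, the only distinguished factor comes from $k = 0$ in the numerator: $|x - x_0| / |x_j - x_0| \le \Delta / (1/m) = m\Delta$, while each of the remaining $t-1$ ratios $|x - x_k|/|x_j - x_k|$ is $O(1)$ by the geometry above, yielding $|w_j| \le C \cdot m\Delta$. There is no substantive obstacle; the only point requiring care is the boundary behavior (small or large $i$), where the block of nearest indices becomes one-sided, but the uniform grid spacing and the constraint $\Delta \le 1/(2m)$ guarantee that the pairwise distance estimates $|x_j - x_k| \asymp 1/m$ remain valid, so the same constant $C = C(t)$ works for every $i \in [m]$.
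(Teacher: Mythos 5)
Your proof is correct and takes essentially the paper's approach: both reduce to the Lagrange basis formula $w_j = \prod_{k\ne j}(x - x_k)/(x_j - x_k)$ (the paper via Cramer's rule on the Vandermonde system, you via polynomial reproduction), and both then bound each factor using $|x - x_0| = \Delta$ together with the fact that all other node--node and node--query gaps are $\Theta(1/m)$. The one small difference is that you bound $w_0$ directly from its Lagrange product, whereas the paper deduces $|w_0| \le 1 + \sum_{j\ge 1}|w_j|$ from the constraint $\sum_j w_j = 1$; both work.
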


\begin{lemma}[Chernoff bound, Theorem 5.4 of \cite{mitzenmacher2005probability}]\label{lemma.binomial}
	For $X\sim \mathsf{B}(n,\lambda/n)$ and any $\delta\in (0,1]$, we have
	\begin{align*}
	\bP(X\ge (1+\delta)\lambda) &\le \left(\frac{e^{\delta}}{(1+\delta)^{1+\delta}} \right)^\lambda \le \exp\left(-\frac{\delta^2\lambda}{3}\right), \\
	\bP(X\le (1-\delta)\lambda) &\le \left(\frac{e^{-\delta}}{(1-\delta)^{1-\delta}} \right)^\lambda \le \exp\left(-\frac{\delta^2\lambda}{2}\right). 
	\end{align*}
\end{lemma}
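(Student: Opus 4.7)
The plan is to prove this standard Chernoff inequality by the usual exponential moment / Markov argument, followed by Taylor comparison to obtain the cleaner Gaussian-like tails.

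First I would handle the upper tail. Fix $t > 0$ and apply Markov's inequality to $e^{tX}$ to get $\Prob(X \ge (1+\delta)\lambda) \le e^{-t(1+\delta)\lambda}\Expect[e^{tX}]$. Since $X = \sum_{i=1}^n X_i$ is a sum of i.i.d.\ Bernoulli$(\lambda/n)$ random variables, $\Expect[e^{tX}] = (1 + (e^t-1)\lambda/n)^n \le \exp(\lambda(e^t-1))$ using the inequality $1+x \le e^x$. Optimizing by setting $t = \log(1+\delta) > 0$ then yields
\[
\Prob(X \ge (1+\delta)\lambda) \le \exp\bigl(\lambda(e^t-1) - t(1+\delta)\lambda\bigr) = \left(\frac{e^\delta}{(1+\delta)^{1+\delta}}\right)^\lambda,
\]
which is the first stated inequality. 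The lower tail follows by the mirror argument: for $t > 0$, apply Markov to $e^{-tX}$, use $\Expect[e^{-tX}] \le \exp(\lambda(e^{-t}-1))$, and optimize at $t = -\log(1-\delta)$ to obtain the analogous bound $\bigl(e^{-\delta}/(1-\delta)^{1-\delta}\bigr)^\lambda$.

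Next I would derive the Gaussian-type tails. Writing $\phi_+(\delta) = \delta - (1+\delta)\log(1+\delta)$ and $\phi_-(\delta) = -\delta - (1-\delta)\log(1-\delta)$, it suffices to show $\phi_+(\delta) \le -\delta^2/3$ and $\phi_-(\delta) \le -\delta^2/2$ on $\delta \in (0,1]$. Both functions vanish together with their first derivatives at $0$, so this reduces to comparing second-order Taylor remainders. Concretely, define $h(\delta) = \phi_+(\delta) + \delta^2/3$; then $h(0) = h'(0) = 0$ and $h''(\delta) = -1/(1+\delta) + 2/3$, which is $\le 0$ on $[0,1]$ because $1+\delta \le 2$, giving $h \le 0$. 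For the lower tail, let $g(\delta) = \phi_-(\delta) + \delta^2/2$; again $g(0) = g'(0) = 0$ and $g''(\delta) = -1/(1-\delta) + 1 \le 0$ on $[0,1)$, so $g \le 0$. Combining with the exponential moment bounds yields the two sub-Gaussian-type tails.

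There is no real obstacle here; the only routine verification is the second-derivative comparison used to pass from the sharp Chernoff form to the $e^{-\delta^2\lambda/3}$ and $e^{-\delta^2\lambda/2}$ forms, and the restriction $\delta\le 1$ is exactly what makes $h''\le 0$ on the relevant interval. Since the lemma is explicitly attributed to \cite{mitzenmacher2005probability}, the write-up can simply cite the reference rather than reproduce the computation.
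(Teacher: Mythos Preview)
The paper does not prove this lemma at all; it simply cites Theorem~5.4 of \cite{mitzenmacher2005probability}, so your proposal to just cite the reference is exactly what the paper does. Your supplied proof sketch is the standard one and is essentially fine, with one genuine slip in the upper-tail simplification.

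You claim that $h''(\delta) = -\frac{1}{1+\delta} + \frac{2}{3} \le 0$ on $[0,1]$ ``because $1+\delta \le 2$''. This is false: $h''$ vanishes at $\delta = 1/2$ and is \emph{positive} on $(1/2,1]$ (for instance $h''(1) = 1/6$). The inequality $1+\delta \le 2$ only gives $h'' \le 1/6$, not $h'' \le 0$. The conclusion $h(\delta)\le 0$ is still true, but you need one more step: since $h'(0)=0$, $h''<0$ on $[0,1/2)$, and $h''>0$ on $(1/2,1]$, the derivative $h'$ first decreases from $0$ and then increases; checking the endpoint $h'(1) = -\log 2 + 2/3 < 0$ shows $h'\le 0$ throughout $[0,1]$, hence $h\le 0$. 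Your lower-tail argument via $g''(\delta) = -\frac{1}{1-\delta} + 1 \le 0$ is correct as written.
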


\begin{lemma}[Generalized Hardy--Littlewood maximal inequality, a slight variant of Lemma 3 of \cite{jiao2018nearest}]\label{lemma.maximal}
	Let $\mu_1,\mu_2$ be two Borel measures that are finite on the bounded Borel sets of $\mathbb{R}^d$. Then, for all $t>0$ and any bounded Borel set $A \subseteq \mathbb{R}^d$,
	\begin{align*}
	\mu_1 \left( \left \{ {x}\in A: \sup_{\rho>0: B(x;\rho)\subseteq A} \left( \frac{\mu_2(B({x};\rho))}{\mu_1(B({x};\rho))} \right) \ge t \right \} \right ) \leq \frac{C_d}{t} \mu_2(A).
	\end{align*}
	Here $C_d>0$ is a constant that depends only on the dimension $d$. 
\end{lemma}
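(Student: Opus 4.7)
The plan is to prove this Hardy–Littlewood–type inequality by a covering argument. Because $\mu_1$ need not be doubling, the standard Vitali $5r$-trick fails, so I would invoke the Besicovitch covering theorem, whose output is a bounded number of \emph{disjoint} subfamilies that already cover the target set (no ball enlargement required).

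First I would reduce to proving the bound with $t$ replaced by any fixed $s\in(0,t)$ and then pass to the limit $s\to t^{-}$. For such an $s$, let
\[
E_s \triangleq \Bigl\{x\in A:\ \exists\,\rho>0 \text{ with } B(x;\rho)\subseteq A \text{ and } \mu_2(B(x;\rho))>s\,\mu_1(B(x;\rho))\Bigr\}.
\]
For each $x\in E_s$, pick a witness radius $\rho_x$, giving a family of balls $\{B(x;\rho_x)\}_{x\in E_s}$. Since $B(x;\rho_x)\subseteq A$ and $A$ is bounded, we have $\rho_x\le \mathrm{diam}(A)<\infty$, so the radii are uniformly bounded, which is the hypothesis needed for Besicovitch.

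Next I would apply the Besicovitch covering theorem (see, e.g., Mattila, \emph{Geometry of Sets and Measures in Euclidean Spaces}, Thm.\ 2.7) to extract a dimensional constant $N_d$ and subfamilies $\mathcal{F}_1,\dots,\mathcal{F}_{N_d}$ of $\{B(x;\rho_x)\}_{x\in E_s}$ such that each $\mathcal{F}_i$ consists of pairwise disjoint balls and $E_s\subseteq\bigcup_{i=1}^{N_d}\bigcup_{B\in\mathcal{F}_i}B$. Then, using the defining inequality of $E_s$ on each ball in $\mathcal{F}_i$ and the disjointness,
\[
\mu_1^\ast(E_s)\ \le\ \sum_{i=1}^{N_d}\sum_{B\in\mathcal{F}_i}\mu_1(B)\ \le\ \frac{1}{s}\sum_{i=1}^{N_d}\sum_{B\in\mathcal{F}_i}\mu_2(B)\ =\ \frac{1}{s}\sum_{i=1}^{N_d}\mu_2\!\Bigl(\bigcup_{B\in\mathcal{F}_i}B\Bigr)\ \le\ \frac{N_d\,\mu_2(A)}{s},
\]
where the equality uses disjointness within each $\mathcal{F}_i$ and the last inequality uses $\bigcup_{B\in\mathcal{F}_i}B\subseteq A$. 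Letting $s\to t^{-}$ yields the desired inequality with $C_d=N_d$.

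The main obstacle is precisely the absence of any doubling hypothesis on $\mu_1$: a Vitali $5r$-covering would force a factor $\mu_1(5B)$ which cannot be controlled by $\mu_1(B)$ in general. Besicovitch's theorem circumvents this by producing disjoint subfamilies that cover $E_s$ by the original balls themselves; the price is the dimensional multiplicity $N_d$, which is exactly what produces the constant $C_d$ in the stated inequality. Secondary technicalities (possible non-measurability of $E_s$ and the option of open vs.\ closed balls $B(x;\rho)$) are handled by working with outer measure $\mu_1^\ast$ throughout and by a monotone approximation of open balls by closed ones, neither of which affects the final constant.
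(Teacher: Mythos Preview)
Your proof is correct and is precisely the standard Besicovitch-based argument for maximal inequalities with respect to a general (non-doubling) reference measure. The paper itself does not supply a proof of this lemma; it is stated as an auxiliary result, labeled ``a slight variant of Lemma 3 of \cite{jiao2018nearest}'', and used as a black box in the proof of Lemma \ref{lemma:minimal_inequality}. So there is nothing in the paper to compare your argument against.

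A couple of minor remarks on presentation. First, your identification of the correct covering tool is exactly right: since no doubling assumption is placed on $\mu_1$, the Vitali $5r$-lemma is unavailable, and Besicovitch (which returns $N_d$ disjoint subfamilies covering the centers, with no ball enlargement) is the natural substitute; the dimensional Besicovitch constant $N_d$ is then the $C_d$ in the statement. Second, your handling of the technicalities (working with outer measure for the possibly non-Borel superlevel set, and the open-versus-closed ball issue) is appropriate and does not affect the constant. One detail you might make explicit is that Besicovitch yields \emph{countable} subfamilies, which is what justifies writing $\sum_{B\in\mathcal{F}_i}\mu_2(B)=\mu_2\bigl(\bigcup_{B\in\mathcal{F}_i}B\bigr)$ via countable additivity.
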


\section{Proof of Main Theorems} 
Throughout the proofs, we introduce the following notations: in addition to the asymptotic notation $a_n = \Theta(b_n)$, we also write $a_n = O(b_n)$ if $\limsup_{n\to\infty} a_n/b_n < \infty$, and $a_n = \Omega(b_n)$ if $b_n = O(a_n)$. Similarly, the notations $\widetilde{O}(\cdot)$, $\widetilde{\Omega}(\cdot)$ denote the respective meanings within logarithmic factors in $n$. 

\subsection{Proof of Theorem \ref{thm.minimax_fixed}}
The minimax risk of HTE estimation under the fixed design is a direct consequence of Theorems \ref{thm.fixed_upper} and \ref{thm.fixed_lower}. 
\subsubsection{Proof of Theorem \ref{thm.fixed_upper}}
By the definition of the pseudo-observation $\hat{Y}^1(x)$ and the potential outcome model \eqref{eq.treatment_model}, we have
\begin{align}\label{eq.difference_representation}
\hat{Y}^1(X_i^0) - Y_0^i = \tau(X_i^0) + b_\mu(X_i^0) + \tilde{\varepsilon}_i, 
\end{align}
with the new errors $\tilde{\varepsilon}_i$ defined as (with $t = \lfloor \beta_\mu \rfloor + 1$)
\begin{align*}
\tilde{\varepsilon}_i = \sum_{j\in [t]^d} w_j\cdot \varepsilon_j^1 - \varepsilon_i^0,
\end{align*}
and the matching bias $b_\mu(X_i^0)$ is
\begin{align*}
b_\mu(X_i^0) = \sum_{j\in [t]^d} w_j \cdot (\mu_0(x_j)+\tau(x_j)) - (\mu_0(X_i^0)+\tau(X_i^0)),
\end{align*}
where $w_j, x_j$ are the weights and the treatment covariates used to obtain pseudo-observations at $X_i^0$. By the smoothness property of $\mu_1 = \mu_0+\tau$ in Assumption \ref{assu:smoothness}, the Taylor expansion around $X_i^0$ gives
\begin{align*}
\mu_1(x_j) &= \sum_{\beta=0}^{\lfloor \beta_\mu \rfloor - 1} \sum_{\sum_{i=1}^d \beta_i = \beta} \frac{\partial^{\beta} \mu_1(X_i^0)}{\partial x_1^{\beta_1} \cdots \partial x_d^{\beta_d}}\prod_{k=1}^d \frac{(x_{j,k} - X_{i,k}^0)^{\beta_k}}{\beta_k!} \\
&\qquad + \sum_{\sum_{i=1}^d \beta_i = \lfloor \beta_\mu \rfloor} \frac{\partial^{\beta} \mu_1(\xi_j)}{\partial x_1^{\beta_1} \cdots \partial x_d^{\beta_d}}\prod_{k=1}^d \frac{(x_{j,k} - X_{i,k}^0)^{\beta_k}}{\beta_k!},
\end{align*}
where $\xi_j\in \bR^d$ is some point on the segment connecting $X_i^0$ and $x_j$. Hence, by the smoothness assumption of $\mu_1$, it holds that
\begin{align*}
\left| \mu_1(x_j) - \sum_{\beta=0}^{\lfloor \beta_\mu \rfloor} \sum_{\sum_{i=1}^d \beta_i = \beta} \frac{\partial^{\beta} \mu_1(X_i^0)}{\partial x_1^{\beta_1} \cdots \partial x_d^{\beta_d}}\prod_{k=1}^d \frac{(x_{j,k} - X_{i,k}^0)^{\beta_k}}{\beta_k!} \right| \le C\sum_{(\beta_1,\cdots,\beta_d) \in \Gamma}\prod_{k=1}^d |x_{j,k} - X_{i,k}^0|^{\beta_k},
\end{align*}
where $C>0$ is an absolute constant, and 
\begin{align*}
\Gamma = \left\{(\beta_1,\cdots,\beta_d): \beta_k\ge 0, \sum_{k=1}^d \beta_k = \beta_\mu, \text{at most one of }\beta_k \text{ is non-integer} \right\}
\end{align*}
is a finite set of indices. Now by the choice of each $w_j$ in \eqref{eq:weight}, we have
\begin{align}\label{eq.matching_bias}
|b_\mu(X_i^0)| \le C \sum_{(\beta_1,\cdots,\beta_d) \in \Gamma}\prod_{k=1}^d \left(\sum_{j=1}^t |w_{j,k}|\cdot |x_{j,k} - X_{i,k}^0|^{\beta_k} \right). 
\end{align}

Consider the term $|w_{j,k}|\cdot |x_{j,k} - X_{i,k}^0|^{\beta_k}$ for each index $j\in [t]$ and $k\in [d]$. If $x_{j,k}$ is the nearest neighbor of $X_{i,k}^0$ in the one-dimensional grid $\{\Delta_k,1/m+\Delta_k, \cdots, 1-1/m + \Delta_k\}$, then $|x_{j,k} - X_{i,k}^0|= \Delta_k$ and $|w_{j,k}|= O(1)$ by Lemma \ref{lemma.weights}. Otherwise, we have $|x_{j,k} - X_{i,k}^0|\le t/m$ and $|w_{j,k}|= O( m\Delta_k) $ again by Lemma \ref{lemma.weights}. Therefore, the following inequality always holds: 
\begin{align*}
|w_{j,k}|\cdot |x_{j,k} - X_{i,k}^0|^{\beta_k} = O\left( m^{-\beta_k}(m\Delta_k)^{\beta_k\wedge 1}\right) = O\left( m^{-\beta_k}(m\|\Delta\|_\infty)^{\beta_k\wedge 1} \right).  
\end{align*}
Consequently, for any $i\in [n]$ it holds that
\begin{align*}
|b_\mu(X_i^0)| = O\left( Ct^d\cdot |\Gamma|\max_{(\beta_1,\cdots,\beta_d)\in \Gamma} \left( \prod_{k=1}^d m^{-\beta_k}(m\|\Delta\|_\infty)^{\beta_k\wedge 1} \right) \right) = O\left( m^{-\beta_\mu}(m\|\Delta\|_\infty)^{\beta_\mu\wedge 1} \right),
\end{align*}
which gives the claimed matching bias upper bound in the error decomposition of Theorem \ref{thm.fixed_upper}. 

The rest of the proof follows from the classic analysis of the Nadaraya-Watson estimator applied to \eqref{eq.difference_representation} (see, e.g. \cite{tsybakov2008introduction}), where the only difference is that the errors $\tilde{\varepsilon}_i$ are weakly dependent instead of being mutually independent, i.e. each $\tilde{\varepsilon}_i$ depends only on finitely many $\tilde{\varepsilon}_j$'s. 

\subsubsection{Proof of Theorem \ref{thm.fixed_lower}}
To prove the lower bound $\Omega((\sigma^2/n)^{\beta_\tau/(2\beta_\tau+d)})$, consider $\mu_0 \equiv 0$. In this scenario, the outcomes of the control group are completely non-informative for estimating $\tau$, and the treatment outcome model \eqref{eq.treatment_model} is precisely the nonparametric regression model for the $\beta_\tau$-H\"{o}lder smooth function $\tau$. Then by the standard minimax lower bound for nonparametric regression (see, e.g. \cite{tsybakov2008introduction}), the lower bound $\Omega((\sigma^2/n)^{\beta_\tau/(2\beta_\tau+d)})$ is immediate. 

Next we prove the lower bound $\Omega(n^{-\beta_\mu/d}(n^{1/d}\|\Delta\|_\infty)^{\beta_\mu \wedge 1})$, which corresponds to the matching bias. We first construct a feasible solution $(\mu_0, \tau)$ to the optimization problem \eqref{eq.optimization}. Without loss of generality we assume that $\Delta_1 = \|\Delta\|_\infty$. Construct the following function $g$ on $[0,1]^d$: for $x=(x_1,\cdots,x_d)\in [0,1]^d$, 
\begin{align*}
g(x) = c\cdot [x_1(1-x_1)]^{\beta_\mu \wedge 1}
\end{align*}
with some small constant $c>0$. We claim that $g$ is $\beta$-H\"{o}lder smooth on $[0,1]^d$ given a sufficiently small $c>0$. In fact, if $\beta_\mu\le 1$, for all $x_1, x_1'\ge 0$ simple algebra gives $|(x_1')^{\beta_\mu} - x_1^{\beta_\mu}| \le |x_1' - x_1|^{\beta_\mu}$, and therefore
\begin{align*}
|g(x) - g(x')| &= c\cdot \left|[x_1(1-x_1)]^{\beta_\mu} - [x_1'(1-x_1')]^{\beta_\mu} \right| \\
&\le c\cdot \left((1-x_1)^{\beta_\mu}|x_1^{\beta_\mu} - (x_1')^{\beta_\mu} | + (x_1')^{\beta_\mu}|(1-x_1)^{\beta_\mu} - (1-x_1')^{\beta_\mu}| \right) \\
&\le c\cdot \left(1\cdot |x_1 - x_1'|^{\beta_\mu} + 1\cdot |x_1 - x_1'|^{\beta_\mu}\right) \\
&= 2c |x_1 - x_1'|^{\beta_\mu} \le 2c \|x-x'\|_2^{\beta_\mu}. 
\end{align*}
If $\beta_\mu>1$, the function $g(x) = cx_1(1-x_1)$ is a smooth function in $x$ and therefore $\beta_\mu$-H\"{o}lder smooth as well. 

Recall that $(0,1]^d = \cup_{i=1}^n (X_i^0 + (0,m^{-1}]^d)$ under the fixed design. Consider the following construction of $\mu_0$: 
\begin{align*}
\mu_0(x) = h(x)\cdot \frac{1}{m^{\beta_\mu}}\sum_{i=1}^n g\left(m(x - X_i^0) \right) \1( m(x-X_i^1)\in (0,1]^d ),
\end{align*}
where the function $g$ is defined above, and $h$ is an arbitrary smooth function on $\bR^d$ supported on $[0,1]^d$ with $h(x) \equiv 1$ for $x\in [1/4,3/4]^d$. In other words, the baseline function $\mu_0$ is a dilation of the reference function $g$ with proper scaling to preserve $\beta_\mu$-smoothness, and the function $h$ preserves the value of $\mu_0$ in the center of $[0,1]^d$ while helps $\mu_0$ connect to zero smoothly at the boundary of $[0,1]^d$. Since multiplying a bounded smooth function does not decrease the smoothness parameter, it is straightforward to verify that $\mu_0\in \calH_d(\beta_\mu)$ fulfills Assumption \ref{assu:smoothness}. 

We also construct the HTE $\tau$ as $\tau(x) = -m^{-\beta_\mu}g(m\Delta)\cdot h(x)$, where $h$ is the same smooth function used in the definition of $\mu_0$. By the smoothness of $h$, it is clear that $\tau\in \calH_d(\beta_\tau)$ for any $\beta_\tau >0$. Moreover, the pair $(\mu_0, \tau)$ is a feasible solution to \eqref{eq.optimization}: the smoothness conditions have already been verified, and 
\begin{align*}
\mu_0(X_i^0) &= h(X_i^0)\cdot m^{-\beta_\mu}g(0) = 0, \\
\mu_0(X_i^1) + \tau(X_i^1) &= h(X_i^1)\cdot m^{-\beta_\mu}g(m\Delta) - m^{-\beta_\mu}g(m\Delta)\cdot h(X_i^1) = 0. 
\end{align*}
Also, the construction of $g$ ensures that $|g(m\Delta)| = \Omega((m\|\Delta\|_\infty)^{\beta_\mu \wedge 1})$, and the fact $h(x)\equiv 1$ for $x\in [1/4,3/4]^d$ gives $\|h\|_1 = \Omega(1)$. Consequently, we have
\begin{align*}
\|\tau\|_1 = m^{-\beta_\mu}|g(m\Delta)|\cdot \|h\|_1 = \Omega\left(n^{-\frac{\beta_\mu}{d}} (n^{\frac{1}{d}}\|\Delta\|_\infty)^{\beta_\mu \wedge 1} \right). 
\end{align*}

Finally, note that the distributions of the outcomes \eqref{eq.treatment_model} in two groups are the same under the above construction of $(\mu_0, \tau)$ and the na\"{i}ve construction $(\mu_0' \equiv 0, \tau' \equiv 0)$, the standard two-point method yields to a minimax lower bound $\|\tau - \tau'\|_2$, which is exactly the desired matching bias. 

\subsection{Proof of Theorem \ref{thm.minimax_random}}
The minimax risk of HTE estimation under the random design is a direct consequence of Theorems \ref{thm.random_upper} and \ref{thm.random_lower}. 

\subsubsection{Proof of Theorem \ref{thm.random_upper}}
For each $x_0\in [0,1]^d$, the construction of the two-stage nearest-neighbor based estimator $\hat{\tau}(x_0)$ in Algorithm \ref{alg:random} gives
\begin{align*}
\hat{\tau}(x_0) - \tau(x_0) &= \frac{1}{m_2}\sum_{i\in I_2} (\mu_0(X_{j(i)}^1) - \mu_0(X_i^0) ) + \frac{1}{m_2}\sum_{i\in I_2} (\tau(X_{j(i)}^1) - \tau(x_0)) + \frac{1}{m_2}\sum_{i\in I_2} (\varepsilon_{j(i)}^1 - \varepsilon_i^0) \\
&\triangleq b_\mu(x_0) + b_\tau(x_0) + s(x_0), 
\end{align*}
where the above terms correspond to the matching bias incurred by $\mu$, estimation bias incurred by $\tau$, and the stochastic error incurred by the noises, respectively. As the final estimation error is measured under $L_1(g_0)$, it suffices to upper bound the quantites $\bE|b_{\mu}(X)|$, $\bE|b_\tau(X)|$, and $\bE|s(X)|$ for a fresh test covariate $X\sim g_0$, respectively. 

We first upper bound the quantity $\bE|b_\mu(X)|$. As in the description of the Algorithm \ref{alg:random}, for $i\in I_1$, let $d_i = \min_{j\in [n]} \|X_i^0 - X_j^1\|_2$ be the minimum Euclidean distance between the control covariate $X_i^0$ and all possible treatment covariates, and $D_1$ be the random variable representing the $m_2$-th smallest value of $(d_i)_{i\in I_1}$. Then by the smoothness condition of $\mu_0$, it is clear that 
\begin{align}\label{eq:b_mu}
\bE|b_\mu(X)| \lesssim \bE[D_1^{\beta_\mu}] \le (\bE[D_1])^{\beta_\mu}. 
\end{align}
Hence, it remains to upper bound the expectation $\bE[D_1]$. First, for any $x\in [0,1]^d, j\in [n]$ and $t\in (0,\sqrt{d})$, we have
\begin{align*}
\bP\left(\|X_j^1 - x\|_2 \ge t \right) &= \int_{y\in [0,1]^d: y\notin B(x;t)} g_1(y)dy \le 1 - \gamma_dt^d\cdot m[g_1](x), 
\end{align*}
by the definition of the minimal function defined in \eqref{eq:minimal_function}, where $\gamma_d$ is the volume of the $d$-dimensional unit ball. Consequently, for each $i\in I_1$ and $0<t\lesssim n^{-1/d}$, we have
\begin{align*}
\bP(d_i \ge t) &= \bE\left[ \bP\left(\|X_1^1 - X_i^0\|_2 \ge t  \mid X_i^0\right)^n \right] \\
&\le \bE\left[\left(1 - \gamma_d t^d\cdot m[g_1](X_i^0) \right)^n \right] \\
&\stepa{\le} \bE\left[\exp\left( - n\gamma_d t^d\cdot m[g_1](X_i^0)\right) \right] \\
&= \int_{[0,1]^d} g_0(x)\exp\left( - {n\gamma_d t^d}m[g_1](x)\right) dx \\
&\stepb{\le} \int_{[0,1]^d} g_0(x)\exp\left( - \frac{n\gamma_d t^d}{\kappa}m[g_0](x)\right) dx \\
&\stepc{\le} \exp\left( -\frac{c_dnt^d}{\kappa} \right), 
\end{align*}
where (a) is due to the elementary inequality $1-x\le \exp(-x)$, (b) follows from the upper bound on the density ratio, and (c) is due to Lemma \ref{lemma:minimal_inequality} and the assumption $t\lesssim n^{-1/d}$, with some absolute constant $c_d>0$. Now by the mutual independence of control and treatment covariates, the random variable $D_1$ is simply the $m_2$-th order statistic of $(d_i)_{i\in I_1}$, and therefore
\begin{align*}
\bP(D_1 \ge t) &= \bP\left(\mathsf{B}\left(m_1, \bP(d_1\ge t) \right) \ge m_1 - m_2+1\right) \\
&\le \bP\left(\mathsf{B}\left(m_1, \exp(-c_dnt^d/\kappa) \right) \ge m_1 - m_2+1\right) \\
&= \bP\left(\mathsf{B}\left(m_1, 1- \exp(-c_dnt^d/\kappa) \right) < m_2\right). 
\end{align*}
Hence, for $t\ge C_d(\kappa m_2/nm_1)^{1/d}$ with a large enough constant $C_d>0$, we have
\begin{align*}
1 - \exp\left(-c_dnt^d/\kappa \right) \le \frac{m_2}{2m_1}, 
\end{align*}
and therefore Chernoff bound (Lemma \ref{lemma.binomial}) gives the upper bound $\bP(D_1\ge t)=\exp(-\Omega(m_2))$, which is smaller than $O(n^{-1})$ by the assumption that $m_2$ is at least logarithmic in $n$. Consequently, 
\begin{align*}
\bE[D_1] = \int_0^{\sqrt{d}} \bP(D_1 \ge t)dt \le C_d\left(\frac{\kappa m_2}{nm_1}\right)^{\frac{1}{d}} + \sqrt{d}\cdot \frac{1}{n} \lesssim \left(\frac{\kappa m_2}{nm_1}\right)^{\frac{1}{d}},
\end{align*}
and consequently \eqref{eq:b_mu} leads to 
\begin{align}\label{eq:b_mu_final}
\bE|b_\mu(X)| \lesssim \left(\frac{\kappa m_2}{nm_1}\right)^{\frac{\beta_\mu}{d}}. 
\end{align}

We then upper bound the quantity $\bE|b_\tau(X)|$. Let $D_2$ be the non-negative random variable representing the Euclidean distance between the query point $X$ and its $m_1$-nearest neighbor in the control group, then standard nearest neighbor analysis (see, e.g. \cite[Theorem 2.4]{biau2015lectures}) gives $\bE[D_2]\lesssim (m_1/n)^{1/d}$. Consequently, 
\begin{align}\label{eq:b_tau_final}
\bE|b_\tau(X)| &\le \frac{1}{m_2}\sum_{i\in I_2}\bE\left( |\tau(X_{j(i)}^1) - \tau(X_i^0)| + |\tau(X_i^0) - \tau(X)| \right) \nonumber\\
&\lesssim \bE[D_1^{\beta_\tau}] + \bE[D_2^{\beta_\tau}] \nonumber \\
&\le (\bE[D_1])^{\beta_\tau} + (\bE[D_2])^{\beta_\tau} \nonumber\\
&\lesssim \left(\frac{\kappa m_2}{nm_1}\right)^{\frac{\beta_\tau}{d}} + \left(\frac{m_1}{n}\right)^{\frac{\beta_\tau}{d}} \nonumber \\
&\lesssim \left(\frac{m_1}{n}\right)^{\frac{\beta_\tau}{d}}, 
\end{align}
where the last inequality follows from the assumption that $\kappa m_2\le m_1$. 

Finally we deal with the stochastic error $\bE|s(X)|$. Clearly, due to the mutual independence of the noises in Assumption \ref{assu:noise}, we have
\begin{align*}
\bE[s(X)^2] \lesssim \frac{\sigma^2}{m_2^2}\bE\left[\sum_{j\in [n]} S_j^2 \right], 
\end{align*}
where $S_j = \sum_{i\in I_2} \1(i\to j)$ is the total number of times $X_j^1$ is chosen to be the nearest neighbor, and $i\to j$ denotes that $X_j^1$ is the nearest neighbor of $X_i^0$ among the treatment covariates. Consequently, 
\begin{align}\label{eq:S_j}
\bE\left[\sum_{j\in [n]} S_j^2 \right] &= \bE\left[\sum_{i\in I_2} \sum_{i'\in I_2} \sum_{j\in [n]} \1(i\to j)\1(i'\to j) \right] \nonumber\\
&=  \bE\left[\sum_{i\in I_2} \sum_{j\in [n]} \1(i\to j) \right] + \bE\left[\sum_{i\neq i'\in I_2} \sum_{j\in [n]} \1(i\to j) \1(i'\to j)\right] \nonumber\\
&= m_2 + \bE\left[\sum_{i\neq i'\in I_2} \sum_{j\in [n]} \1(i\to j) \1(i'\to j)\right], 
\end{align}
where the last identity follows from $\sum_{j\in [n]} \1(i\to j) = 1$. To deal with the second term, first assume that the control and treatment covariates have the same distribution. Then
\begin{align*}
\bE\left[\sum_{i\neq i'\in I_2} \sum_{j\in [n]} \1(i\to j) \1(i'\to j)\right] \le  \bE\left[\sum_{i\in I_2} \sum_{i'\in [n], i'\neq i} \sum_{j\in [n]} \1(i\to j) \1(i'\to j)\right],
\end{align*}
and we only need to upper bound each quantity $\bE[\1(i\to j)\1(i'\to j)]$ for $i\in I_2, i'\in [n]\backslash \{i\}$, and $j\in [n]$. By the i.i.d. assumption, the covariates $X_1^1,\cdots,X_n^1,X_{i'}^0$ are i.i.d. conditioning on $X_i^0$. This observation motivates us to consider the following problem: let $Y_0=x\in [0,1]^d$ be any fixed point, and $Y_1,\cdots,Y_{n+1}$ be i.i.d. distributed as $X_1^1$. For $i\in [n+1]\cup\{0\}$ and $j\in [n+1]$ with $i\neq j$, let $d(i,j)\in [n]$ be the integer $d$ such that $Y_j$ is the $d$-th nearest neighbor of $Y_i$ among $\{Y_1,\cdots,Y_{i-1},Y_{i+1},\cdots,Y_{n+1}\}$. Consequently, identifying $X_i^0, X_{i'}^0, X_j^1$ as $Y_0, Y_{n+1}$, and $Y_{1}$ respectively, the i.i.d. assumption gives
\begin{align*}
\bE\left[\1(i\to j)\1(i'\to j) \mid X_i^0 = x\right] \le \bE[\1(d(0,1)\le 2)\1(d(n+1,1)=1)]. 
\end{align*}
To upper bound the RHS, note that the exchangeability of $(X_1,\cdots,X_{n+1})$ yields
\begin{align*}
\bE[\1(d(0,1)\le 2)\1(d(n+1,1)=1)] &= \frac{1}{n(n+1)}\sum_{i\neq j\in [n+1]} \bE[\1(d(0,i)\le 2)\1(d(j,i)=1)] \\
&= \frac{1}{n(n+1)}\bE\left[\sum_{i=1}^{n+1}\1(d(0,i)\le 2)\sum_{j\neq i} \1(d(j,i)=1)\right] \\
&\stepa{\le} \frac{1}{n(n+1)}\bE\left[\sum_{i=1}^{n+1}\1(d(0,i)\le 2)\cdot c_d\right] \\
&\stepb{=} \frac{2c_d}{n(n+1)}, 
\end{align*}
where (a) follows from the deterministic inequality $\sum_{j\neq i} \1(d(j,i)=1) \le c_d$ in \cite[Lemma C.1]{gao2017estimating}, i.e. each $X_j$ can only be the nearest neighbor of at most $c_d$ points, with constant $c_d$ depending only on $d$. The identity (b) trivially follows from the definition of nearest-neighbors and $d(0,i)$. Consequently, combining the above displays and \eqref{eq:S_j}, we arrive at 
\begin{align*}
\bE\left[\sum_{j\in [n]} S_j^2 \right]  \le (1+2c_d)m_2,
\end{align*}
and therefore 
\begin{align}\label{eq:s_final}
\bE[|s(X)|] \le \sqrt{\bE[s(X)^2]} \lesssim \frac{\sigma}{\sqrt{m_2}}. 
\end{align}
In the general case, as $I_2$ is essentially a uniformly random subset of $I_1$, we have
\begin{align*}
\bE\left[\sum_{i\neq i'\in I_2} \sum_{j\in [n]} \1(i\to j) \1(i'\to j)\right] &\lesssim \left(\frac{m_2}{m_1}\right)^2\bE\left[\sum_{i\neq i'\in I_1} \sum_{j\in [n]} \1(i\to j) \1(i'\to j)\right]\\
&\le \left(\frac{m_2}{m_1}\right)^2\bE\left[\sum_{i\in I_1} \sum_{i'\in [n], i'\neq i} \sum_{j\in [n]} \1(i\to j) \1(i'\to j)\right]. 
\end{align*}
Moreover, by the likelihood ratio assumption, a simple change of measure also gives
\begin{align*}
\bE\left[\1(i\to j)\1(i'\to j) \mid X_i^0 = x\right] \le \kappa\cdot \bE[\1(d(0,1)\le 2)\1(d(n+1,1)=1)].
\end{align*}
Consequently, as $m_1\ge \kappa m_2$, we again arrive at the same upper bound \eqref{eq:s_final}, and the claimed Theorem \ref{thm.random_upper} follows from \eqref{eq:b_mu_final}, \eqref{eq:b_tau_final}, and \eqref{eq:s_final}. 

\subsubsection{Proof of Theorem \ref{thm.random_lower}}
We first show that to prove the lower bound of the minimax rate, it suffices to consider the case where both $g_0$ and $g_1$ are uniform over $[0,1]^d$, and there are $n$ and $m=n/\kappa$ (assumed to be an integer as $\kappa\le n$) observations for the control and treatment groups, respectively. To see this, note that we may choose $g_0$ to be the uniform distribution on $[0,1]^d$, and $g_1$ be $1/\kappa$ in a smaller cube and $\kappa$ otherwise. Note that the smaller cube has the volume $\kappa/(\kappa+1)$, which is at least $\Omega(1)$. Besides, the conditional distribution of $g_1$ restricted to the smaller cube is again uniform, and with high probability there are $O(n)$ control observations and $O(n/\kappa)$ treatment observations in the smaller cube. Consequently, restricting everything including the minimax risk to the smaller cube and using a proper scaling of constant factor, the desired reduction holds for the purpose of proving the lower bound. Furthermore, in this case the $L_1(g_0)$ norm coincides with the usual $L_1$ norm. 

As in the proof of Theorem \ref{thm.fixed_lower}, after choosing $\mu\equiv 0$, the lower bound $\Omega((\sigma^2/m)^{\beta_\tau /(2\beta_\tau+d) })$ follows directly from the known results in nonparametric estimation. Hence, using the relationship $m=n/\kappa$, the last error term of Theorem \ref{thm.random_lower} is established. For the other two errors, we will need to construct appropriate functions in $\calH_d(\beta_\mu)$ and $\calH_d(\beta_\tau)$, and by multiplying a smooth function supported on $[0,1]^d$ (as in the proof of Theorem \ref{thm.fixed_lower}) we will not consider the boundary effects in the construction below. 

First we prove the lower bound $\widetilde{\Omega}((\kappa/n^2)^{1/d(\beta_\mu^{-1} + \beta_\tau^{-1})})$ via providing another feasible solution to the optimization problem \eqref{eq.optimization}. Specifically, let $\mu_0, \tau$ be the H\"{o}lder smooth functions with their respective smoothness parameters $\beta_\mu, \beta_\tau$ assured by Lemma \ref{lemma.holder} based on the following value specifications: 
\begin{align*}
\mu_0(X_i^0) = 0, \qquad \mu_0(X_i^1) = -\tau(X_i^1) = c\cdot \min_{j\in [m]}\left(\|X_i^1 - X_j^1\|_2^{\beta_\tau} + \min_{k \in [n]} \|X_j^1 - X_k^0\|_2^{\beta_\mu} \right), 
\end{align*}
for all $i\in [m]$, where $c>0$ is a small constant. However, before applying Lemma \ref{lemma.holder}, we need to show that the conditions of Lemma \ref{lemma.holder} hold for the above value specifications. 

For the HTE function $\tau$, using the Lipschitz property of the minimum $|\min_k a_k - \min_k b_k|\le \max_k |a_k - b_k|$, we have
\begin{align*}
&|\tau(X_i^1) - \tau(X_{i'}^1)| \\
&\le c\cdot \max_{j\in [m]} \left| \|X_i^1 - X_j^1\|_2^{\beta_\tau} + \min_{k \in [n]} \|X_j^1 - X_k^0\|_2^{\beta_\mu} - \left(\|X_{i'}^1 - X_j^1\|_2^{\beta_\tau} + \min_{k \in [n]} \|X_j^1 - X_k^0\|_2^{\beta_\mu} \right) \right| \\
&= c\cdot \max_{j\in [m]} \left| \|X_i^1 - X_j^1\|_2^{\beta_\tau} - \|X_{i'}^1 - X_j^1\|_2^{\beta_\tau} \right| \\
&\le c\cdot \|X_i^1 - X_{i'}^1\|_2^{\beta_\tau},
\end{align*}
where the last step follows from $| |a|^{\beta_\tau} - |b|^{\beta_\tau} |\le |a-b|^{\beta_\tau}$ for all $\beta_\tau \in [0,1]$. Hence, for constant $c>0$ sufficiently small, the condition of Lemma \ref{lemma.holder} holds for $\tau$. 

For the baseline function $\mu_0$, using the same analysis we have $|\mu_0(X_i^1) - \mu_0(X_{i'}^1)| = O(\|X_i^1 - X_{i'}^1\|_2^{\beta_\tau}) = O(\|X_i^1 - X_{i'}^1\|_2^{\beta_\mu})$ for the treatment-treatment pairs. However, for $\mu_0$ we also need to verify the condition of Lemma \ref{lemma.holder} for control-control and treatment-control pairs. The the control-control pairs are easy to verify: $|\mu_0(X_i^0) - \mu_0(X_{i'}^0)| = 0$ always holds. As for the treatment-control pairs, we have
\begin{align*}
|\mu_0(X_i^1) - \mu_1(X_{i'}^0)| &= |\mu_0(X_i^1)| \\
&= c\cdot \min_{j\in [m]}\left(\|X_i^1 - X_j^1\|_2^{\beta_\tau} + \min_{k \in [n]} \|X_j^1 - X_k^0\|_2^{\beta_\mu} \right) \\
&\le c\cdot \min_{k\in [n]} \|X_i^1 - X_k^0\|_2^{\beta_\mu} \\
&\le c\cdot \|X_i^1 - X_{i'}^0\|_{2}^{\beta_\mu},
\end{align*}
as desired. Hence, both functions $\mu_0$ and $\tau$ can be extended to H\"{o}lder smooth functions on $[0,1]^d$ with desired smoothness parameters, and $(\mu_0, \tau)$ is a feasible solution to the optimization problem \eqref{eq.optimization}. 

Next we provide a lower bound of $\|\tau\|_1$ for the above construction of $\tau$. Choose a bandwidth
\begin{align*}
h_1 = \widetilde{\Theta}\left((mn)^{-\frac{\beta_\mu}{d(\beta_\tau+\beta_\mu)}} \right)
\end{align*}
and define
\begin{align*}
I = \left\{i\in [m]: \min_{j\in [n]}\|X_i^1 - X_j^0\|_2 \le \frac{1}{h_1}\left(\frac{1}{mn\log n}\right)^{\frac{1}{d}} \right\}. 
\end{align*}
The next lemma shows that the cardinality of $I$ is upper bounded by $1/(3h^d)$ with high probability. 
\begin{lemma}\label{lemma:min_dist}
	If $h_1 \le c(\log n)^{-2/d}$ with constant $c>0$ small enough, then with probability at least $1-n^{-3}$ we have $|I| \le 1/(3h^d)$ for sufficiently large $n$. 
\end{lemma}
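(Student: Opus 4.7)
The plan is to condition on the control covariates $\{X_j^0\}_{j\in[n]}$, reduce $|I|$ to a binomial random variable, and then apply a large-deviation Chernoff bound. Write $r \triangleq h_1^{-1}(mn\log n)^{-1/d}$ for the threshold appearing in the definition of $I$. Given $\{X_j^0\}_{j\in[n]}$, the treatment covariates $X_1^1,\dots,X_m^1$ remain i.i.d.\ uniform on $[0,1]^d$, and a union bound over $j\in[n]$ gives
\begin{align*}
p \triangleq \mathbb{P}\Big( \min_{j\in[n]} \|X_i^1 - X_j^0\|_2 \le r \,\Big|\, \{X_j^0\} \Big) \le n\gamma_d r^d = \frac{\gamma_d}{m h_1^d \log n},
\end{align*}
uniformly in $\{X_j^0\}$. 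Hence $|I|$ conditioned on $\{X_j^0\}$ is stochastically dominated by $\mathrm{Bin}(m,p)$ with mean $\lambda \le \gamma_d/(h_1^d \log n)$.

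Next I would apply the Chernoff bound in the large-deviation regime. Set $t \triangleq 1/(3h_1^d)$; the ratio $t/\lambda \ge \log n/(3\gamma_d)$ diverges as $n\to\infty$, so the standard tail estimate $\mathbb{P}(\mathrm{Bin}(m,p)\ge t)\le (e\lambda/t)^t$ yields
\begin{align*}
\mathbb{P}\bigl(|I|\ge t \bigm| \{X_j^0\}\bigr) \le \left(\frac{3e\gamma_d}{\log n}\right)^{1/(3h_1^d)}.
\end{align*}
Taking logarithms,
\begin{align*}
\log \mathbb{P}\bigl(|I|\ge t\bigm|\{X_j^0\}\bigr) \le -\frac{1}{3h_1^d}\log\!\left(\frac{\log n}{3e\gamma_d}\right).
\end{align*}

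Now I would invoke the hypothesis $h_1 \le c(\log n)^{-2/d}$, which gives $1/h_1^d \ge (\log n)^2/c^d$. Substituting,
\begin{align*}
\log \mathbb{P}\bigl(|I|\ge t\bigm|\{X_j^0\}\bigr) \le -\frac{(\log n)^2}{3c^d}\log\!\left(\frac{\log n}{3e\gamma_d}\right),
\end{align*}
whose right-hand side is less than $-3\log n$ for $n$ large (or $c$ small). Since the bound is uniform in $\{X_j^0\}$, the unconditional probability $\mathbb{P}(|I|\ge 1/(3h_1^d))$ is at most $n^{-3}$, as claimed.

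The only real subtlety is making sure the Chernoff bound is used in its large-deviation form rather than its sub-Gaussian form: the typical value of $|I|$ is of order $1/(h_1^d\log n)$, so the target $1/(3h_1^d)$ is a factor $\log n$ \emph{above} the mean, and the geometric $(e\lambda/t)^t$ tail is precisely what supplies the polynomial-in-$n$ decay. The assumption $h_1 \le c(\log n)^{-2/d}$ is exactly what makes $1/h_1^d$ large enough (namely, at least $(\log n)^2$) for the exponent $(1/h_1^d)\log\log n$ to dominate $\log n$; any weaker lower bound on $1/h_1^d$ would not close the argument.
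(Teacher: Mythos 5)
Your proof is correct and follows essentially the same route as the paper: bound the conditional probability $\bP(\min_j\|X_i^1-X_j^0\|_2\le r\mid\{X_j^0\})$ uniformly, stochastically dominate $|I|$ by a $\mathrm{Bin}(m,p)$ variable, and apply a Chernoff tail. The only deviation is cosmetic: the paper invokes the multiplicative form $\bP(X\ge 2\lambda)\le\exp(-\lambda/3)$ and then observes $2\lambda = 2v_d/(h_1^d\log n)\le 1/(3h_1^d)$ for large $n$, while you aim directly at the threshold $1/(3h_1^d)$ via the Poisson-type tail $(e\lambda/t)^t$ (which follows from the same first inequality in the paper's Lemma \ref{lemma.binomial}); as you correctly observe, your form only needs $1/h_1^d\gtrsim\log n/\log\log n$, so the stated hypothesis $h_1\le c(\log n)^{-2/d}$ is more than enough, whereas it is exactly what the paper's $\delta=1$ variant requires.
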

\begin{proof}
	As the control covariates follow the uniform distribution, for any $x\in [0,1]^d$ we have
	\begin{align*}
	\bP\left( \|x - X_j^0\|_2 \ge \frac{1}{h_1(mn\log n)^{1/d}} \right) \ge 1 - v_d\left(\frac{1}{h_1(mn\log n)^{1/d}} \right)^d = 1 - \frac{v_d}{mnh_1^d\log n},
	\end{align*}
	where $v_d$ is the volume of the unit ball in $d$ dimensions. Hence, for each individual $i\in [m]$ in the treatment group, we have
	\begin{align*}
	\bP(E_i) \triangleq \bP\left(\min_{j\in [n]} \|X_i^1 - X_j^0\|_2 \ge \frac{1}{h_1(mn\log n)^{1/d}} \right) \ge \left(1 - \frac{v_d}{mnh_1^d\log n}\right)^n \ge 1 - \frac{v_d}{mh_1^d\log n}. 
	\end{align*}
	
	Hence, the target quantity $\sum_{i=1}^m \1(E_i^c)$ is a Binomial random variable with number of observations $m$ and $\bP(E_i^c)\le v_d/(mh_1^d\log n)$. Using the first inequality of Lemma \ref{lemma.binomial} with $\delta = 1, \lambda = m\bP(E_i^c)$ gives
	\begin{align*}
	\bP\left(\sum_{i=1}^m \1(E_i^c) \ge \frac{2v_d}{h_1^d\log n} \right) \le \exp\left(- \frac{v_d}{3h_1^d\log n} \right). 
	\end{align*}
	Therefore, by using $h_1\le c(\log n)^{-2/d}$ with a small enough $c>0$, the claimed results hold for sufficiently large $n$. 
\end{proof}

By Lemma \ref{lemma:min_dist} we know that $|I| \le 1/(3h_1^d)$ with probability at least $1-n^{-3}$. We also define the set of \emph{good} indices as follows: 
\begin{align*}
I^\star = \left\{i\in [m]: \min_{j\in I} \|X_i^1 - X_j^1\|_2 > \frac{h_1}{2} \right\}. 
\end{align*}
We claim that for any $i\in I^\star$, we have $|\tau(X_i^1)|= \widetilde{\Omega}(h_1^{\beta_\tau})$. In fact, for any $j\notin I$, the definition of $I$ gives
\begin{align*}
\|X_i^1 - X_j^1\|_2^{\beta_\tau} + \min_{k \in [n]} \|X_j^1 - X_k^0\|_2^{\beta_\mu} \ge \min_{k \in [n]} \|X_j^1 - X_k^0\|_2^{\beta_\mu} > \left(\frac{1}{h_1(mn\log n)^{1/d}}\right)^{\beta_\mu} = \widetilde{\Omega}(h_1^{\beta_\tau}),
\end{align*}
where the last identity follows from the choice of $h_1$. As for the other case $j\in I$, the definition of $I^\star$ gives
\begin{align*}
\|X_i^1 - X_j^1\|_2^{\beta_\tau} + \min_{k \in [n]} \|X_j^1 - X_k^0\|_2^{\beta_\mu} \ge \|X_i^1 - X_j^1\|_2^{\beta_\tau} \ge \min_{j\in I} \|X_i^1 - X_j^1\|_2^{\beta_\tau} = \Omega(h_1^{\beta_\tau}),
\end{align*}
as claimed. Hence, all $|\tau(X_i^1)|$ with index $i\in I^\star$ have a large magnitude, and it remains to show that $|I^\star|$ is large with high probability. In fact, conditioning on $|I|\le 1/(3h_1^d)$, the following set
\begin{align*}
A := \left\{x\in [0,1]^d: \min_{j\in I} \|X_i^1 - x\|_2 \le \frac{h_1}{2} \right\}
\end{align*}
has $d$-dimensional volume at most $|I|\cdot h_1^d \le 1/3$. Note that $X_i^1$ is uniformly distributed, the region $A$ has probability at most $1/3$. Hence, by the standard Binomial tail bound (cf. Lemma \ref{lemma.binomial}), we conclude that with probability at least $1-2n^{-3}$, the number of treatment covariates $X_i^1$ falling into $A$ is at most $m/2$. Consequently, we have $|I^\star| \ge m/2$. In other words, with high probability, at least a constant fraction of $X_i^1$ satisfies $|\tau(X_i^1)|= \widetilde{\Omega}(h_1^{\beta_\tau})$, and therefore $\|\tau\|_1 = \widetilde{\Omega}(h_1^{\beta_\tau})$ with high probability as well. Now the two-point method applied to $(\mu_0, \tau)$ and $(\mu_0'\equiv 0, \tau'\equiv 0)$ together with the choice of $h_1$ gives the desired lower bound $\widetilde{\Omega}((\kappa/n^2)^{1/d(\beta_\mu^{-1} + \beta_\tau^{-1})})$. 

Next we prove the final lower bound $\widetilde{\Omega}((\kappa\sigma^2/n^2)^{{1}/{(2+d(\beta_\mu^{-1} + \beta_\tau^{-1}))}})$. To this end, we construct exponentially many hypotheses $(\mu_0^v, \tau^v)$ indexed by $v\in\{\pm 1\}^M$ and apply Fano's inequality. Specifically, let $h_1, h_2$ be the bandwidths given as follows:
\begin{align*}
h_1 &= \widetilde{\Theta}\left( \frac{1}{(mn)^{1/d}}\left(\frac{\sigma}{\sqrt{mn}}\right)^{-\frac{2\beta_\tau}{2\beta_\mu\beta_\tau+d(\beta_\mu+\beta_\tau)}}\right), \\
h_2 &=\widetilde{\Theta}\left( \left(\frac{\sigma}{\sqrt{mn}}\right)^{\frac{2\beta_\mu}{2\beta_\mu\beta_\tau+d(\beta_\mu+\beta_\tau)}}\right). 
\end{align*}
and $M = h_2^{-d}$. Fix any smooth function $g$ supported on $[0,1]^d$, then the standard dilation analysis yields that 
\begin{align*}
\tau^v(x) = c\cdot h_2^{\beta_{\tau}}\sum_{i=1}^{M} v_ig\left(\frac{x-x_i}{h_2}\right)
\end{align*}
for sufficiently small constant $c>0$ belongs to the H\"{o}lder ball $\calH_d(\beta_\tau)$ for all $v\in \{\pm 1\}^{M}$, where $x_1,\cdots,x_M$ are vertices of the small cubes so that $\{x_1,\cdots,x_M\} + [0,h_2]^d = [0,1]^d$. Note that
\begin{align*}
\|\tau^v - \tau^{v'} \|_1 = ch_2^{\beta_\tau}\|g\|_1\cdot d_{\text{H}}(v,v'),
\end{align*}
where $d_{\text{H}}(v,v') = M^{-1}\sum_{i=1}^M \1(v_i \neq v_i')$ is the normalized Hamming distance between binary vectors $v$ and $v'$. By the Gilbert-Varshamov bound, there exists $\calV_0 \subseteq \{\pm 1\}^M$ with $|\calV_0|\ge 2^{M/8}$ such that $d_{\text{H}}(v,v') \ge 1/5$ for all $v,v'\in \calV_0$. Let $V$ be a random variable uniformly distributed in $\calV_0$, and $Y$ be the collection of all outcomes in \eqref{eq.treatment_model} based on $\tau^v$ and $\mu_0^v$ defined below. The standard Fano-type arguments (see, e.g. \cite{tsybakov2008introduction}) give
\begin{align*}
R_{n,d,\beta_\mu,\beta_\tau,\sigma}^{\text{random}}(\kappa) = \Omega\left(h_2^{\beta_\tau}\left(1 - \frac{I(V;Y) + \log 2}{M/8} \right) \right). 
\end{align*}
Next we give an explicit construction of $\mu_0^v$ for each $v\in \{\pm 1\}^M$, and upper bound the mutual information $I(V;Y)$. 

The baseline functions $\mu_0^v$, also indexed by $v\in \{\pm 1\}^M$, are chosen to offset the differences of $\tau^v$ on the treatment covariates. Motivated by Lemma \ref{lemma.holder}, we choose the reference function
\begin{align*}
\tilde{\mu}_0(X_i^1) = c\cdot \min_{j\in [m]} \left(\|X_i^1 - X_j^1\|_2^{\beta_\mu} + \min_{k\in [n]} \|X_i^1 - X_k^0\|_2^{\beta_\mu} \right),
\end{align*}
and 
\begin{align*}
\mu_0^v(X_i^1) = -\text{sign}(\tau^v(X_i^1))\cdot \left(|\tau^v(X_{i}^1)| \wedge \tilde{\mu}_0(X_i^1) \right),
\end{align*}
and $\mu_0^v(X_i^0) = 0$ for any $i\in [n]$. As before, using similar arguments we conclude that the above value specifications satisfy the conditions of Lemma \ref{lemma.holder}, and thus each $\mu_0^v$ can be extended to a function in $\calH_d(\beta_\mu)$ for each $v$. Since distributions of the control outcomes do not depend on $v$, $\mu_0^v(X_i^1) + \tau^v(X_i^1) = 0$ whenever $\tilde{\mu}_0(X_i^1)\ge ch_2^{\beta_\tau}\|g\|_\infty\ge |\tau^v(X_i^1)|$, and all noises are normal distributed with variance $\sigma^2$, we have
\begin{align*}
I(V;Y) &\le \max_{v\neq v'\in \calV_0} D_{\text{KL}}(P_{Y|V=v}\| P_{Y|V=v'}) \\
&=  \max_{v\neq v'\in \calV_0} \sum_{i=1}^m \frac{(\mu_0^v(X_i^1) + \tau_0^v(X_i^1) - \mu_0^{v'}(X_i^1) - \tau_0^{v'}(X_i^1) )^2}{2\sigma^2} \\
&\le 4c^2h_2^{2\beta_\tau}\|g\|_\infty^2\cdot \sum_{i=1}^m \1(\tilde{\mu}_0(X_i^1) \le ch_2^{\beta_\tau}\|g\|_\infty ).
\end{align*}
By the choices of $h_1$ and $h_2$ above, we have $h_2^{\beta_\tau} = \widetilde{\Omega}((h_1(mn)^{1/d})^{-\beta_\mu})$. Hence, similar arguments of Lemma \ref{lemma:min_dist} lead to
\begin{align*}
\left| \left\{i\in [m]: \min_{j\in [n]}\|X_i^1 - X_j^0\|_2 \wedge \min_{j\neq i} \|X_i^1- X_j^1\|_2 \le  \frac{1}{h_1}\left(\frac{1}{mn\log n}\right)^{\frac{1}{d}} \right\} \right| = O(h_1^{-d})
\end{align*}
with probability at least $1-n^{-3}$. Also, for any $i\in [m]$ not belonging to the above set of indices, it is straightforward to see that $\tilde{\mu}_0(X_i^1) = \widetilde{\Omega}((h_1n^{2/d})^{-\beta_\mu})$. Hence, with probability at least $1-n^{-3}$ we have $\sum_{i=1}^n \1(\tilde{\mu}_0(X_i^1) \le ch_2^{\beta_\tau}\|g\|_\infty ) = O(h_1^{-d})$. Therefore, 
\begin{align*}
I(V;Y) = O\left(\frac{h_2^{2\beta_\tau}}{h_1^d\sigma^2}\right) = O\left(h_2^{-d} \right) = O(M)
\end{align*}
holds with high probability over the random covariates. Plugging the above upper bound of the mutual information into the Fano's inequality, we conclude that
\begin{align*}
R_{n,d,\beta_\mu,\beta_\tau,\sigma}^{\text{random}}(\kappa) = \widetilde{\Omega}(h_2^{\beta_\tau}) = \widetilde{\Omega}\left( \left(\frac{\kappa\sigma^2}{n^2}\right)^{\frac{1}{2+d(\beta_\mu^{-1} + \beta_\tau^{-1})}}\right),
\end{align*}
as desired.

\section{Proof of Main Lemmas}
\subsection{Proof of Lemma \ref{lemma:minimal_inequality}}
We first establish the following inequality: for any $\varepsilon>0$, there exists a constant $C_d>0$ depending only on $d$ such that
\begin{align}\label{eq:target}
\int_{[0,1]^d} f(x)\1(m[f](x) \le \varepsilon)dx \le C_d\varepsilon. 
\end{align}
To establish \eqref{eq:target}, we apply the maximal inequality in Lemma \ref{lemma.maximal} to the set $A = [-\sqrt{d},1+\sqrt{d}]^d$ and measures $\mu_1(dx) = f(x)\lambda(dx)$, $\mu_2(dx) = \lambda(dx)$, with $\lambda$ being the $d$-dimensional Lebesgue measure. Then clearly for every $x\in [0,1]^d$, we have
\begin{align*}
\sup_{\rho>0: B(x;\rho)\subseteq A} \left( \frac{\mu_2(B({x};\rho))}{\mu_1(B({x};\rho))} \right) \ge \frac{1}{m[f](x)}. 
\end{align*}
Consequently, choosing $t = 1/\varepsilon$ in Lemma \ref{lemma.maximal} gives
\begin{align*}
\int_{[0,1]^d} f(x)\1(m[f](x) \le \varepsilon)dx &\le \mu_1\left( \left\{x\in A:  \sup_{\rho>0: B(x;\rho)\subseteq A} \left( \frac{\mu_2(B({x};\rho))}{\mu_1(B({x};\rho))} \right) \ge \frac{1}{\varepsilon}  \right\} \right) \\
&\le C_d'\varepsilon\cdot \mu_2(A) = C_d\varepsilon, 
\end{align*}
establishing \eqref{eq:target}. 

Next we make use of \eqref{eq:target} to prove Lemma \ref{lemma:minimal_inequality}. For $\lambda>eC_d$, let $\varepsilon$ be a random variable following an exponential distribution with rate $\lambda$. Then clearly $\bP(m[f](x) \le \varepsilon) = \exp(-\lambda \cdot m[f](x))$, and taking expectation at both sides of \eqref{eq:target} leads to
\begin{align*}
\int_{[0,1]^d} f(x)\exp(-\lambda \cdot m[f](x))dx \le C_d\cdot \bE[\varepsilon] = \frac{C_d}{\lambda},
\end{align*}
as claimed. For $\lambda\le eC_d$, note that applying the above arguments gives that for any $t\ge 1$, 
\begin{align*}
\int_{[0,1]^d} f(x)\exp(-t\lambda \cdot m[f](x))dx \le  \frac{C_d}{t\lambda}. 
\end{align*}
Hence, using the fact that $f$ is a density and the H\"{o}lder's inequality, we have
\begin{align*}
\int_{[0,1]^d} f(x)\exp(-\lambda \cdot m[f](x))dx &\le \inf_{t\ge 1} \left(\int_{[0,1]^d} f(x)\exp(-t\lambda \cdot m[f](x))dx \right)^{\frac{1}{t}} \\
&\le \inf_{t\ge 1} \left(\frac{C_d}{t\lambda} \right)^{\frac{1}{t}} = \exp\left( - \frac{\lambda}{eC_d}\right),  
\end{align*}
where the minimizer in the last step is $t = eC_d/\lambda \ge 1$.

\subsection{Proof of Lemma \ref{lemma.holder}}
The necessity result simply follows from the H\"{o}lder ball condition, and it remains to prove the sufficiency. Without loss of generality we assume that $L=1$. We first show that given $\{(x_i,y_i)\}_{i\in [n]}$ satisfying the condition of Lemma \ref{lemma.holder}, for any other $x\in \bR^d \backslash \{x_i\}_{i\in [n]}$ there exists a value specification $y \in \RR$ of $f(x)$ such that the same condition holds for the $(n+1)$ points $\{(x_i,y_i)\}_{i\in [n]} \cup \{(x,y)\}$, i.e.
\begin{align*}
|y_i - y|
\le \|x_i - x\|_2^{\beta}, \quad \forall i \in [n]. 
\end{align*}
To prove the existence of $y$, it suffices to show that 
\begin{align*}
\min_{i\in [n]}\left\{y_i +  \|x_i - x\|_2^{\beta}\right\}
\ge \max_{j\in [n]}\left\{y_j -  \|x_j - x\|_2^{\beta}\right\}.
\end{align*}
In fact, for $i$, $j \in [n]$,
\begin{align*}
\left(y_i +  \|x_i - x\|_2^{\beta}\right)
- \left(y_j -  \|x_j - x\|_2^{\beta}\right)
&= (y_i - y_j) + \|x_i - x\|_2^{\beta} +  \|x_j - x\|_2^{\beta} \\
&\ge - \|x_i - x_j\|_2^{\beta} + \|x_i - x\|_2^{\beta} +  \|x_j - x\|_2^{\beta} \\
&\ge 0.
\end{align*}
Hence, we may construct the $(n+1)$-th point based on $n$ points, and repeating this procedure leads to a construction of $f$ on any countable dense subset $\calB \subseteq \bR^d$. Now the proof is completed by taking
\begin{align*}
f(x) = \lim_{x_n \in \calB, x_n\to x} f(x_n). 
\end{align*}

\subsection{Proof of Lemma \ref{lemma.weights}}
For $t = 0$, the lemma is obvious. In the following we consider $t \ge 1$ and write the linear equations in the matrix form
\begin{align*}
\label{eq:linearEquationMatrix}
\begin{bmatrix}
1 &1  &\cdots &1 \\
z_0 &z_1 &\cdots &z_t \\
\vdots & \vdots &\vdots &\vdots \\
z_0^t &z_1^t &\cdots &z_t^t \\
\end{bmatrix}
\begin{bmatrix}
w_{0} \\
w_{1} \\
\vdots \\
w_{t}
\end{bmatrix}
= 
\begin{bmatrix}
1 \\
0 \\
\vdots \\
0
\end{bmatrix}.
\end{align*}
where $z_j= x - x_j$, $0 \le j \le t$. Since the above square matrix is a Vandermonde matrix, the weight vector $(w_0,\cdots,w_t)$ exists and is unique. Now by the Cramer's rule, for $i\ge 1$ we have
\begin{align*}
|w_i| &= \left| \frac{\det \begin{bmatrix}
	1 & \cdots & 1 & \cdots & 1 \\
	z_0 & \cdots & 0 & \cdots & z_t \\
	\vdots & \vdots & \vdots & \vdots & \vdots \\
	z_0^t & \cdots & 0 & \cdots & z_t^t
	\end{bmatrix}
}{\det \begin{bmatrix}
	1 & \cdots & 1 & \cdots & 1 \\
	z_0 & \cdots & z_i & \cdots & z_t \\
	\vdots & \vdots & \vdots & \vdots & \vdots \\
	z_0^t & \cdots & z_i^t & \cdots & z_t^t
	\end{bmatrix}} \right| = \prod_{0\le j\le t, j\neq i} \left| \frac{z_j}{z_j - z_i} \right| \\
&= \frac{\Delta}{|z_0 - z_i|}\cdot \prod_{1\le j\le t, j\neq i} \left|\frac{z_j}{z_j - z_i} \right| \le m\Delta\cdot (t+1)^{t-1},
\end{align*}
where the last inequality follows from $|z_j|\le (t+1)/m$ and $|z_j - z_i|\ge 1/m$ for $i\neq j$. The proof is completed by noting that
\begin{align*}
|w_0| \le 1 + \sum_{i=1}^t |w_i| \le 1 + m\Delta\cdot t(t+1)^{t-1} \le 1 + t(t+1)^{t-1} =: C. 
\end{align*}

\bibliography{ref}
\bibliographystyle{alpha}

\end{document}